\documentclass{amsart}
\usepackage{amsfonts}
\usepackage{amsmath,amssymb}
\usepackage{amsthm}
\usepackage{amscd}
\usepackage{graphics}
\usepackage{graphicx}

\theoremstyle{remark}{
\newtheorem{Def}{{\rm Definition}}
\newtheorem{Ex}{{\rm Example}}

\newtheorem{Prob}{{\rm Problem}}

}
\theoremstyle{plain}
{

\newtheorem{Prop}{Proposition}
\newtheorem{Thm}{Theorem}

}

\begin{document}
\title[Reeb spaces of functions associated to two smooth functions]{Fundamental examples of Reeb spaces of smooth functions defined from two graphs of smooth functions with same asymptotic behaviors}
\author{Naoki kitazawa}
\keywords{Smooth, real analytic, or real algebraic (real polynomial) functions and maps. Reeb spaces. Graphs. Digraphs. Reeb (di)graphs. \\
\indent {\it \textup{2020} Mathematics Subject Classification}: Primary~26E05, 54C30, 57R45, 58C05.}

\address{Osaka Central Advanced Mathematical Institute (OCAMI) \\
3-3-138 Sugimoto, Sumiyoshi-ku Osaka 558-8585
TEL: +81-6-6605-3103
}
\email{naokikitazawa.formath@gmail.com}
\urladdr{https://naokikitazawa.github.io/NaokiKitazawa.html}
\maketitle
\begin{abstract}

{\it Reeb spaces} of (continuous) real-valued functions on (nice) topological spaces are the spaces whose underlying sets consist of all connected components ({\it contours}) of their level sets and seen naturally as quotient spaces of the spaces. They are "$1$-dimensional" spaces in various nice cases. They are graphs or graphs with {\it ends} for smooth function cases with nice singularities and behaviors. Reeb spaces have been fundamental and important in theory of Morse functions and more general smooth functions and applications to geometry, since the 20th century.

We present Reeb spaces homeomorphic to infinite graphs (with ends) for functions on non-compact manifolds with no boundary. This paper is a note on cases previously obtained by the author. More explicitly, we consider a natural smooth map onto the region surrounded by the graphs of two smooth real-valued functions in the plane and its composition with the canonical projection.

\end{abstract}
%【REVISE】 combinatoric ～ is → combinatorial object. It is .
%【REVISE】  such that a point is a vertex if and only if the corresponding connected component of the level set contains some singular points → whose vertex set is the set of all points containing some singular points in the corresponding connected component of the level set .
%【REVISE】 We delete "extending the result before".
\section{Introduction.}
\label{sec:1}

The {\it Reeb space} $R_c$ of a (continuous) real-valued function $c:X \rightarrow \mathbb{R}$ on a (nice) topological space $X$ is regarded to be the space of all connected components ({\it contours}) of all preimages ({\it level sets}) $c^{-1}(q)$. In the case of differentiable functions, we add "{\it regular}" (resp. "{\it critical}") before "contour" and "level set" if it contains no critical point (resp. some critical points) of the function. We explain the notions mathematically and rigorously. We can induce the equivalence relation ${\sim}_c$ on $X$ by the following rule: the relation $x_1 {\sim}_c x_2$ holds if and only if $x_1$ and $x_2$ are in a same contour of a same level set $c^{-1}(y)$. The quotient map $q_c:X \rightarrow R_c:=X/{{\sim}_c}$ and the unique continuous function $\bar{c}:R_c \rightarrow \mathbb{R}$ with $c=\bar{c} \circ q_c$ is also defined. 

These tools or objects have been well-known as fundamental and important tools or objects in theory of Morse functions and more general differentiable functions with nice singularity (\cite{reeb}).
$R_c$ is, for a function $c$ which is not so wild mathematically, has the structure of a graph. For the Morse function case and Morse-Bott function case (on compact manifolds), see \cite{izar, martinezalfaromezasarmientooliveira} and for more general cases, see \cite{gelbukh, saeki1, saeki2}, for example. Especially, \cite[Theorem 7.5]{gelbukh}, \cite[Theorem 3.1]{saeki1}, and \cite[Theorems 2.1 and 2.8 and "2 Reeb space and its graph structure"]{saeki2} are important as theory on topologies and graph structures of these spaces. 

Hereafter, let ${\mathbb{R}}^k$ denote the $k$-dimensional {\it Euclidean} ({\it real affine}) {\it space} ($\mathbb{R}:={\mathbb{R}}^1$). Let ${\pi}_{k_1,k_2}:{\mathbb{R}}^{k_1} \rightarrow {\mathbb{R}}^{k_2}$ denote the canonical projection mapping $x=(x_1,x_2) \in {\mathbb{R}}^{k_1-k_2} \times {\mathbb{R}}^{k_2}$ to $x_1$ with $k_1>k_2 \geq 1$. A {\it graph} means a $1$-dimensional connected, locally finite, and locally compact CW complex such that the closure of each $1$-cell ({\it edge}) is homeomorphic to the $1$-dimensional disk $D^1:=\{x \in \mathbb{R} \mid x^2 \leq 1\}$: $0$-cell is a {\it vertex}. A {\it graph with ends} (with related notions) is defined by weakening the condition on edges as follows: the closure of each $1$-cell ({\it edge}) is homeomorphic to $D^1$ or a ray $\{x \geq 0 \mid x \in \mathbb{R}\}$. For CW complexes, see \cite{hatcher} for example.   

In the case of the smooth manifold $X$ with no boundary, $R_c$ is, in a certain nice case, a graph (with ends) whose vertex set consists of points for all critical contours of $c$. This is the so-called {\it Reeb graph} of $c$ and in most studies including presented studies above, this definition is adopted. We can see the Reeb graph as a digraph (with ends) by giving an orientation on each edge by the following rule: an edge $e$ is an oriented edge departing from (resp. entering) $v_{e}$ if the restriction of $\bar{c}$ to the closure of $e$ in the graph has the minimum (resp, maximum) at $v_e$. 
\begin{Prob}
\label{prob:1}
Can we construct a (nice) smooth function whose
Reeb space is regarded to be the Reeb graph isomorphic to a given graph (resp. with ends)?
\end{Prob}

This is a kind of fundamental, natural and incredibly new problems. Sharko has launched in this 2006 (\cite{sharko}). This is a study on smooth functions on closed surfaces whose critical points $p$ are of the form $c(z)={\Re} z^{l}+t_p$ or critical points of Morse functions and whose Reeb graphs are given graphs of a certain class. Here $\mathbb{C}$ is the space of all complex numbers, $l>1$ is an integer, $t_p \in \mathbb{R}$, and ${\Re} s$ denotes the real part of $s \in \mathbb{C}$. This is extended to arbitrary finite graphs in \cite{masumotosaeki} and functions whose critical points may not be isolated are studied. In \cite{michalak}, a certain class of finite graphs is important and Morse functions on closed manifolds with prescribed Reeb graphs of the class and regular contours which are spheres are reconstructed. 
The author has contributed to this, in, \cite{kitazawa1, kitazawa4}, for example. There the author respects not only graphs, but also shapes of level sets. In addition, in \cite{kitazawa2}, the author considers smooth functions on non-compact manifolds and so-called {\it non-proper} functions, explicitly, first. As another important pioneering study by the author, \cite{kitazawa3} is on real algebraic construction with prescribed Reeb graphs. The author is contributing to real algebraic construction, mainly, recently, where we ignore related knowledge, and omit precise exposition on other related papers and preprints, except \cite{kitazawa5, kitazawa6}. Later, as a kind of our main ingredients, related arguments appear in our main ingredients and we discuss the arguments in self-contained ways (Theorem \ref{thm:1}).

Recently the author has started to consider infinite graphs (with ends) as Reeb spaces and obtained several explicit cases. They are also constructed as, fully or densely, real algebraic or real analytic and based on the paper \cite{kitazawa3} and the preprints \cite{kitazawa5, kitazawa6}. Related preprints are \cite{kitazawa7, kitazawa8}. We ignore related non-trivial knowledge and discuss related arguments in self-contained ways. The present paper is also a kind of our new remark on examples of \cite{kitazawa7, kitazawa8}, especially \cite[Theorem 1]{kitazawa7}. There real algebraic or real analytic functions or maps are presented and answers to Problem \ref{prob:1} are presented as the restrictions of the canonical projections ${\pi}_{k_1,k_2}$ to the zero sets of these nice smooth maps between the real affine spaces. We respect these examples and present answers from a kind of essentially generalized viewpoints.

In the next section, we prove our new result. This is for a generalization of the case presented in \cite[Theorem 1 and problem 6]{kitazawa7} and in Example \ref{ex:1}, this is explained again.  Explicitly, we consider a natural smooth map onto the region surrounded by the graphs of two smooth real-valued functions $c_1$ and $c_2$ diverging to a same value at each infinity in ${\mathbb{R}}^2$, obtained via Theorem \ref{thm:1}, and its composition with the canonical projection ${\pi}_{2,1}$.  We investigate the Reeb space of this function. 
\section{Our main result.}
We introduce or review several notions additionally. Some already appear in the last section, where it is no problem. They are fundamental and well-known.  

For a map $c:X \rightarrow Y$ and a subset $Z \subset Y$, let $c {\mid}_Z$ denote the restriction of $c$ to $Z$.

A {\it proper} map $c:X \rightarrow Y$ between topological spaces $X$ and $Y$ is a map whose preimage $c^{-1}(K)$ is compact for any each subset $K \subset X$. A {\it non-proper} map is one which may not be proper.

For a differentiable map between differentiable manifolds, a point in the manifold of the domain is a {\it singular} point of it where the rank of the differential of the map is smaller than both the dimension of the manifold of the domain and that of the manifold of the target. 
The {\it singular set} $S(c)$ of the map $c:X \rightarrow Y$ is defined as the set of all singular points of the map. The {\it singular value set} of the map $c$ is the image $c(S(c))$ of the singular set of the map.
We use "{\it critical}" in the case where the map is the real-valued map ($Y:=\mathbb{R}$).

Theorem \ref{thm:1} is essentially from \cite{kitazawa3}. 
\begin{Thm}
	\label{thm:1}
	Let $m \geq 2$ be an integer.
	Let $c_i:\mathbb{R} \rightarrow \mathbb{R}$ {\rm (}$i=1,2${\rm )} be smooth functions such that $c_1(x)<c_2(x)$ for any $x \in \mathbb{R}$.
	Then consider a region $D_{c_1,c_2}:=\{(x_1,x_2)\mid c_1(x_2)<x_1<c_2(x_2)\}$ and its closure $\overline{D_{c_1,c_2}}$ in ${\mathbb{R}}^2$.
	Then we have an $m$-dimensional smooth submanifold $X_{m,c_1,c_2}:=\{(x_1,x_2,{(y_j)}_{j=1}^{m-1}) \in \overline{D_{c_1,c_2}} \times {\mathbb{R}}^{m-1} \subset {\mathbb{R}}^{m+1} \mid (x_1-c_1(x_2))(c_2(x_2)-x_1)-{\Sigma}_{j=1}^{m-1} {y_j}^2=0\}$ with no boundary and the restriction ${\pi}_{m+1,2} {\mid}_{X_{m,c_1,c_2}}$ is a surjection onto $\overline{D_{c_1,c_2}}$ whose singular set is $\{(x_1,x_2,{(y_j)}_{j=1}^{m-1}) \mid (x_1,x_2) \in \overline{D_{c_1,c_2}}-D_{c_1,c_2}\}=\{(x_1,x_2,{(0)}_{j=1}^{m-1}) \mid (x_1,x_2) \in \overline{D_{c_1,c_2}}-D_{c_1,c_2}\}$.
	\end{Thm}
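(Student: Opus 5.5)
We write $F(x_1,x_2,y):=(x_1-c_1(x_2))(c_2(x_2)-x_1)-\Sigma_{j=1}^{m-1}{y_j}^2$ on ${\mathbb{R}}^{m+1}$, with $y=(y_j)_{j=1}^{m-1}$. The plan is to show that $X_{m,c_1,c_2}$ coincides with the full zero set $F^{-1}(0)$ and that $0$ is a regular value of $F$. The first statement then follows from the implicit function theorem. The second follows from reading off the kernel of the differential of ${\pi}_{m+1,2}$ restricted to the tangent spaces.

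\emph{Step 1: $X_{m,c_1,c_2}=F^{-1}(0)$.} If $F=0$, then $(x_1-c_1(x_2))(c_2(x_2)-x_1)=\Sigma {y_j}^2\geq 0$. Since $c_1(x_2)<c_2(x_2)$, this product is nonnegative exactly when $c_1(x_2)\leq x_1\leq c_2(x_2)$. Such points form the closed set $\overline{D_{c_1,c_2}}$: it is closed by continuity of $c_i$, and every point of it is a limit of points of $D_{c_1,c_2}$. Hence the constraint $(x_1,x_2)\in\overline{D_{c_1,c_2}}$ is automatic, and $X_{m,c_1,c_2}$ is the closed set $F^{-1}(0)$. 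Surjectivity onto $\overline{D_{c_1,c_2}}$ follows by taking $y_1=\sqrt{(x_1-c_1(x_2))(c_2(x_2)-x_1)}$ and $y_j=0$ for $j\geq 2$.

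\emph{Step 2: $0$ is a regular value.} We have $\partial F/\partial y_j=-2y_j$ and $\partial F/\partial x_1=c_1(x_2)+c_2(x_2)-2x_1$. If all partial derivatives vanish at a zero of $F$, then $y=0$. This forces $(x_1-c_1)(c_2-x_1)=0$, so $x_1=c_1(x_2)$ or $x_1=c_2(x_2)$. In either case $\partial F/\partial x_1=\pm(c_2(x_2)-c_1(x_2))\neq 0$, a contradiction. So $X_{m,c_1,c_2}$ is a smooth closed $m$-dimensional submanifold without boundary.

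\emph{Step 3: singular set.} At $p\in X_{m,c_1,c_2}$ the tangent space is $\ker dF_p$. The restricted projection fails to have rank $2$ exactly when $\ker dF_p$ contains a nonzero vector of the form $(0,0,v)$ with $v$ ranging over an $(m-1)$-dimensional space. Equivalently, $dF_p$ vanishes on the $y$-directions, i.e.\ $y=0$.
\begin{itemize}
\item If $y\neq 0$, then $dF_p$ restricted to the $y$-directions is nonzero. The tangent space has dimension $m$ and meets $\{0\}\times{\mathbb{R}}^{m-1}$ in dimension $m-2$, so its image has dimension $2$ and $p$ is regular.
\item If $y=0$, then $p$ lies over the boundary $\overline{D_{c_1,c_2}}-D_{c_1,c_2}$ and $\{0\}\times{\mathbb{R}}^{m-1}\subset T_pX$, so the rank is at most $1<2$ and $p$ is singular.
\end{itemize}
Finally, over a boundary point the fiber equation forces $\Sigma {y_j}^2=0$, so the two descriptions of the singular set agree.

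The only mildly delicate point is Step 1, namely identifying $\overline{D_{c_1,c_2}}$ with $\{c_1\leq x_1\leq c_2\}$ so that no boundary of $X_{m,c_1,c_2}$ arises. This is elementary given continuity and $c_1<c_2$.
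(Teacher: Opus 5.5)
Your proof is correct and follows the same route as the paper: you realize $X_{m,c_1,c_2}$ as the zero set of $F$ and check that $dF\neq 0$ on it, using $\partial F/\partial y_j$ over $D_{c_1,c_2}$ and $\partial F/\partial x_1=\pm(c_2(x_2)-c_1(x_2))$ over $\overline{D_{c_1,c_2}}-D_{c_1,c_2}$, and you also write out what the paper calls easy (the constraint $(x_1,x_2)\in\overline{D_{c_1,c_2}}$ is automatic, surjectivity, and the rank count for the singular set). One wording fix in Step 3: the criterion for rank $<2$ is that $\ker dF_p$ contains the entire space $\{0\}\times\{0\}\times\mathbb{R}^{m-1}$, not merely some nonzero vector of that form (for $m\geq 3$ such vectors always exist), and this is exactly what your dimension count in the two cases uses.
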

\begin{proof}
	This is a main ingredient of \cite{kitazawa3} and we prove this in a self-contained way.
	It is a main ingredient to show that by implicit function theorem, this is the zero set of the smooth function and is a smooth manifold with no boundary. This is the zero set of the smooth function $(x_1-c_1(x_2))(c_2(x_2)-x_1)-{\Sigma}_{j=1}^{m-1} {y_j}^2$ by the definition.
	
	In the case $(x_1,x_2) \in D_{c_1,c_2}$ in the zero set, the value of the partial derivative of the function $(x_1-c_1(x_2))(c_2(x_2)-x_1)-{\Sigma}_{j=1}^{m-1} {y_j}^2$ by some $y_j$ is not $0$.
	
	In the case $(x_1,x_2) \in \overline{D_{c_1,c_2}}-D_{c_1,c_2}$ in the zero set, the value of the partial derivative of the function $(x_1-c_1(x_2))(c_2(x_2)-x_1)-{\Sigma}_{j=1}^{m-1} {y_j}^2$ by $x_1$ is $c_1(x_2)-x_1$ in the case $c_2(x_2)-x_1=0$ and $c_2(x_2)-x_1$ in the case $x_1-c_1(x_2)=0$. The values are both non-zero.

This means that the ranks of the differential of the function are always $1$ on the zero set. The zero set is an $m$-dimensional smooth submanifold of ${\mathbb{R}}^{m+1}$ with no boundary. We can prove the remaining easily by the situation. 
	\end{proof}
	\begin{Thm}
		\label{thm:2}
		In Theorem \ref{thm:1}, the following hold. We abuse the notation.
		\begin{enumerate}
			\item \label{thm:2.1} A point of $X_{m,c_1,c_2}$ is a critical point of the function ${\pi}_{m+1,1} {\mid}_{X_{m,c_1,c_2}}$ if and only if it is a singular point of the surjection ${\pi}_{m+1,2} {\mid}_{X_{m,c_1,c_2}}$ onto
			$\overline{{D}_{m,c_1,c_2}}$ and of the form $(c_i(s),s,{(0)}_{j=1}^{m-1})$, where $s$ is a critical point of the function $c_i$.
			
			\item \label{thm:2.2} Suppose that the intersection $\{\mathbb{R}-(\{q_1\}\bigcup \{q_2\})\}
 \bigcap (c_1(S(c_1)) \bigcup c_2(S(c_2)))$ is a discrete and closed subset of $\{\mathbb{R}-(\{q_1\}\bigcup \{q_2\})\}$ and that at least one function of $c_1$ and $c_2$ has at least one critical point. Suppose also the following.
			
			\begin{enumerate}
				\item \label{thm:2.2.1}
				As $t \in \mathbb{R}$ diverges to $-\infty$, then both the values $c_1(t)$ and $c_2(t)$ converge to $q_1 \in \mathbb{R}$.
				\item \label{thm:2.2.2} As $t \in \mathbb{R}$ diverges to $+\infty$, then both the values $c_1(t)$ and $c_2(t)$ converge to $q_2 \in \mathbb{R}$.
				
				\item \label{thm:2.2.3} For $i=1,2$, each connected component $K_{c_1,c_2,q_i,j}$ of the set ${{\pi}_{m+1,1}}^{-1}(q_i) \bigcap X_{m,c_1,c_2}$ has a small compact and connected neighborhood $N_{c_1,c_2,q_i,j}$ in $X_{m,c_1,c_2}$ consisting of contours of the the restriction ${\pi}_{m+1,1} {\mid}_{X_{m,c_1,c_2}}$ and exactly one contour  in a level set of the form ${{\pi}_{m+1,1}}^{-1}(q_1) \bigcap X_{m,c_1,c_2}$ or ${{\pi}_{m+1,1}}^{-1}(q_2) \bigcap X_{m,c_1,c_2}$. The complementary set of $K_{c_1,c_2,q_i,j}$ in the connected component $N_{c_1,c_2,q_i,j}$ consists of finitely many connected sets and contains no critical contour of ${\pi}_{m+1,1} {\mid}_{X_{m,c_1,c_2}}$.
				
			\end{enumerate}
		Then the Reeb space of the function ${\pi}_{m+1,1} {\mid}_{X_{m,c_1,c_2}}$ is regarded to be the Reeb graph.
		\end{enumerate}
		\end{Thm}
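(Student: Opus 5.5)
For (\ref{thm:2.1}) I will compute critical points of $f:={\pi}_{m+1,1} {\mid}_{X_{m,c_1,c_2}}$ with Lagrange multipliers. $X_{m,c_1,c_2}$ is the regular zero set of $F(x_1,x_2,y):=(x_1-c_1(x_2))(c_2(x_2)-x_1)-{\Sigma}_{j=1}^{m-1} {y_j}^2$, as shown in the proof of Theorem \ref{thm:1}. Hence a point is critical for $f$ exactly when $dF$ is parallel to $dx_1$ there, i.e.\ when $\partial F/\partial x_2=0$ and $\partial F/\partial y_j=-2y_j=0$ for all $j$.

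With all $y_j=0$, the equation $F=0$ forces $x_1=c_1(x_2)$ or $x_1=c_2(x_2)$. By Theorem \ref{thm:1} these are precisely the singular points of ${\pi}_{m+1,2} {\mid}_{X_{m,c_1,c_2}}$. Suppose $x_1=c_1(x_2)$, so that $x_1\neq c_2(x_2)$. Then
\[
\partial F/\partial x_2=-c_1'(x_2)(c_2(x_2)-x_1)+(x_1-c_1(x_2))c_2'(x_2)=-c_1'(x_2)(c_2(x_2)-c_1(x_2)),
\]
which vanishes iff $c_1'(x_2)=0$. The case $x_1=c_2(x_2)$ is symmetric. This gives (\ref{thm:2.1}).

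For (\ref{thm:2.2}) I will identify the Reeb space $R_f$ with a graph with ends, with vertices exactly the critical contours. The approach is to decompose $X_{m,c_1,c_2}$ along values of $f$.

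\emph{Critical values.} By (\ref{thm:2.1}), the critical values of $f$ are $c_1(S(c_1)) \bigcup c_2(S(c_2))$. By hypothesis these form a discrete closed set away from $q_1,q_2$.

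\emph{Local structure near critical contours.} Near a critical point $(c_i(s),s,0)$, the preimage under ${\pi}_{m+1,2} {\mid}_{X_{m,c_1,c_2}}$ of a small disk around $(c_i(s),s)$ is a compact piece of $X_{m,c_1,c_2}$. This holds because the fibres over $D_{c_1,c_2}$ are $(m-2)$-spheres that collapse on the boundary. A critical contour $C$ in a level $f^{-1}(t)$ with $t\neq q_1,q_2$ is compact. The reason is that $\{x_1=t\}\cap\overline{D_{c_1,c_2}}$ stays away from the two infinities, since $c_1,c_2\to q_1,q_2$ and $t\neq q_1,q_2$. Such a contour has a compact neighbourhood $f^{-1}([t-\varepsilon,t+\varepsilon])\cap U$ in which the remaining levels are regular.

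\emph{Edges.} Over an open interval $J$ of regular values avoiding $q_1,q_2$, the map $f$ restricted to $f^{-1}(J)$ is proper. This follows from the same boundedness of $x_2$, using $\overline{D_{c_1,c_2}}\cap\{x_1\in \bar J'\}$ compact for compact $J'\subset J$. By Ehresmann's theorem $f^{-1}(J)\to J$ is then a trivial bundle with compact fibres, so its contours form finitely many open arcs in $R_f$.

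\emph{The levels $q_1,q_2$.} Hypothesis (\ref{thm:2.2.3}) is exactly what handles these. Each contour $K_{c_1,c_2,q_i,j}$ has a compact saturated neighbourhood whose complement has finitely many components without critical contours. Its image in $R_f$ is therefore a cone on finitely many points, and I declare it a vertex.

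Gluing these local pictures, $R_f$ is a locally finite $1$-dimensional CW complex whose vertices are the critical contours and the contours in $f^{-1}(q_i)$. Closures of edges are arcs, or rays where $f\to\pm\infty$ or toward non-compact directions. Local finiteness comes from discreteness and closedness of the critical value set together with finiteness near each vertex. Connectedness of $R_f$ is inherited from $X_{m,c_1,c_2}$, which is connected as the fibration over the connected set $\overline{D_{c_1,c_2}}$ with connected fibres. The assumption that some $c_i$ has a critical point guarantees a vertex exists.

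\textbf{Main obstacle.} The contours in $f^{-1}(q_i)$ are non-compact. There the fibration argument fails, and one must check that they really appear as vertices (or as regular points) and that $R_f$ is Hausdorff and locally compact there; this is where (\ref{thm:2.2.3}) must be used carefully. I would first establish Hausdorffness of $R_f$ at these points from the compact saturated neighbourhoods $N_{c_1,c_2,q_i,j}$. A secondary point is that some vertices at $q_i$ may contain no critical point, so the phrase ``regarded to be the Reeb graph'' must allow such vertices, or they must be removed as degree-two vertices.
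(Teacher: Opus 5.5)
This is essentially the paper's argument. Part (\ref{thm:2.1}) goes by the derivative computation that the paper leaves to ``the construction'', and part (\ref{thm:2.2}) by properness of ${\pi}_{m+1,1} {\mid}_{X_{m,c_1,c_2}}$ over $\mathbb{R}-(\{q_1\}\bigcup \{q_2\})$ together with hypothesis (\ref{thm:2.2.3}) for the finite cone structure and Hausdorffness at the levels $q_1,q_2$; the only difference is that you do by hand, via Ehresmann's theorem and compactness of critical contours, what the paper imports from Saeki's theorems.

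Your ``main obstacle'' is misdiagnosed, though harmlessly. Each $K_{c_1,c_2,q_i,j}$ is closed in the compact set $N_{c_1,c_2,q_i,j}$ and hence compact. What actually fails near $q_i$ is properness: there can be infinitely many contours at level $q_i$ and critical values accumulating at $q_i$, as in Example \ref{ex:1} (\ref{ex:1.1}). So the edges leaving such a vertex cross critical levels, and they must be seen to be edges via product neighbourhoods of compact regular contours, not only via your regular-interval argument. Also, for $m=2$ the fibres over $D_{c_1,c_2}$ are $S^0$, so connectedness of $X_{m,c_1,c_2}$ comes from the two sheets being glued along $\overline{D_{c_1,c_2}}-D_{c_1,c_2}$, not from connected fibres.
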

		\begin{proof}
			The statement (\ref{thm:2.1}) holds from our construction. 
			
			We prove the statement (\ref{thm:2.2}). First, the critical sets of the functions $c_1$ and $c_2$ are discrete by the assumption. From the conditions (\ref{thm:2.2.1}, \ref{thm:2.2.2}) and the structures of the maps and the manifolds, the restriction of ${\pi}_{m+1,1} {\mid}_{X_{m,c_1,c_2}}$ to the preimage of $\mathbb{R}-(\{q_1\}\bigcup \{q_2\})$ is proper. The intersection $\{\mathbb{R}-(\{q_1\}\bigcup \{q_2\})\}
			\bigcap (c_1(S(c_1)) \bigcup c_2(S(c_2)))$ is a discrete and closed subset of $\{\mathbb{R}-(\{q_1\}\bigcup \{q_2\})\}$. By virtue of \cite{saeki1, saeki2}, its Reeb space is regarded to be homeomorphic to a $1$-dimensional manifold or a graph which is not homeomorphic to a manifold. In addition, if at least one critical point of the function exists, then the Reeb space of the restriction of ${\pi}_{m+1,1} {\mid}_{X_{m,c_1,c_2}}$ to the preimage of $\mathbb{R}-(\{q_1\}\bigcup \{q_2\})$ is regarded to be the Reeb graph. For \cite{saeki1, saeki2}, related to this argument, mainly, consult \cite[Theorem 3.1]{saeki1} and \cite[Theorems 2.1 and 2.8 and "2 Reeb space and its graph structure"]{saeki2}. Related to this theory, see also \cite{gelbukh}. We consider the condition (\ref{thm:2.2.3}). According to this, around each point in ${\bar{{\pi}_{m+1,1} {\mid}_{X_{m,c_1,c_2}}}}^{-1}(\{q_1\}\bigcup \{q_2\})$, the Reeb space is homeomorphic to either a $1$-dimensional manifold or a finite graph which is not homeomorphic to a manifold. This is also a Hausdorff space by the assumption that $N_{c_1,c_2,q_i,j}$ in $X_{m,c_1,c_2}$ contains exactly one contour of ${\pi}_{m+1,1} {\mid}_{X_{m,c_1,c_2}}$ in level sets of the form ${{\pi}_{m+1,1}}^{-1}(q_1) \bigcap X_{m,c_1,c_2}$ or ${{\pi}_{m+1,1}}^{-1}(q_2) \bigcap X_{m,c_1,c_2}$. The Reeb space is also regarded to be the Reeb graph (if there exists a critical point of the function ${\pi}_{m+1,1} {\mid}_{X_{m,c_1,c_2}}$ there), locally. This also comes from the general theory above. We can easily see that two distinct contours of the function ${\pi}_{m+1,1} {\mid}_{X_{m,c_1,c_2}}$ are always separated by suitably chosen neighborhoods in $X_{m,c_1,c_2}$ and the Reeb space of the function ${\pi}_{m+1,1} {\mid}_{X_{m,c_1,c_2}}$ is a Hausdorff space. It is also a locally connected space of course.

At least one of the functions $c_1$ and $c_2$ has at least one critical point, by the assumption. From this together with the arguments above, the Reeb space of the function ${\pi}_{m+1,1} {\mid}_{X_{m,c_1,c_2}}$ is regarded to be the Reeb graph and a graph with ends. 
			
			This completes the proof of (\ref{thm:2.2}).
			
		\end{proof}
		\begin{Prop}
			\label{prop:0}
			In Theorem \ref{thm:1}, if $S(c_1)$ and $S(c_2)$ are discrete and conditions of {\rm (}\ref{thm:2.2.1}, \ref{thm:2.2.2}{\rm )} of Theorem \ref{thm:2} are satisfied, then the intersection $\{\mathbb{R}-(\{q_1\}\bigcup \{q_2\})\}
			\bigcap (c_1(S(c_1)) \bigcup c_2(S(c_2)))$ is a discrete and closed subset of $\mathbb{R}-(\{q_1\}\bigcup \{q_2\})$.

		\end{Prop}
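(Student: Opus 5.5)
We must show: for any compact interval $[a,b]\subset \mathbb{R}-\{q_1,q_2\}$, only finitely many critical values of $c_1,c_2$ lie in it. Equivalently, the only possible accumulation points of $V:=c_1(S(c_1))\cup c_2(S(c_2))$ in $\mathbb{R}$ are $q_1$ and $q_2$. That suffices, because a subset of $\mathbb{R}-\{q_1,q_2\}$ whose accumulation points in $\mathbb{R}$ all lie in $\{q_1,q_2\}$ is discrete and closed in $\mathbb{R}-\{q_1,q_2\}$.

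The plan is to argue by contradiction using the asymptotic conditions (\ref{thm:2.2.1}) and (\ref{thm:2.2.2}) of Theorem \ref{thm:2}. Suppose $v\in\mathbb{R}-\{q_1,q_2\}$ is an accumulation point of $V$. Then there are infinitely many distinct values $v_n\in V$ with $v_n\to v$. Passing to a subsequence, they all come from the same $c_i$, so $v_n=c_i(s_n)$ with $s_n\in S(c_i)$ and the $s_n$ pairwise distinct.

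There are two cases for the sequence $(s_n)$.

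\textbf{Case 1: $(s_n)$ is bounded.} A subsequence converges to some $s\in\mathbb{R}$. The critical set $S(c_i)$ is closed, being the zero set of the continuous function $c_i'$, so $s\in S(c_i)$. Since the $s_n$ are distinct, $s$ is an accumulation point of $S(c_i)$. This contradicts the hypothesis that $S(c_i)$ is discrete. (Here "discrete" must mean discrete as a subset with no accumulation points in $\mathbb{R}$; because $S(c_i)$ is closed, the two readings coincide.)

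\textbf{Case 2: $(s_n)$ is unbounded.} A subsequence diverges to $+\infty$ or to $-\infty$. By conditions (\ref{thm:2.2.2}) and (\ref{thm:2.2.1}), $c_i(s_n)$ then tends to $q_2$ or to $q_1$ respectively. Hence $v\in\{q_1,q_2\}$, a contradiction.

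In both cases the assumption fails, which proves the claim.

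The main subtle point is Case 1: the distinct values $v_n$ force the $s_n$ to be distinct, and the closedness of $S(c_i)$ puts the limit $s$ inside $S(c_i)$, which contradicts discreteness. Everything else is routine sequential compactness.
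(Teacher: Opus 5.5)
Your proof is correct and is essentially the paper's argument in sequential form. The paper takes a compact $K\subset\mathbb{R}-\{q_1,q_2\}$, uses the asymptotic conditions to see that $c_1^{-1}(K)\cup c_2^{-1}(K)$ is bounded and hence compact (your Case 2), and concludes that it meets the discrete critical sets in only finitely many points (your Case 1); your explicit use of the closedness of $S(c_i)$ makes precise a step the paper leaves implicit.
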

		\begin{proof}
			A compact subset $K_{c_1,c_2}$ in
		$\mathbb{R}-(\{q_1\}\bigcup \{q_2\})$ is also bounded and closed in $\mathbb{R}-(\{q_1\}\bigcup \{q_2\})$. In the preimage ${c_1}^{-1}({K_{c_1,c_2}}) \bigcup {c_2}^{-1}({K_{c_1,c_2}})$, by the discreteness of the critical set of $c_i$ and the boundedness and compactness of ${c_1}^{-1}({K_{c_1,c_2}}) \bigcup {c_2}^{-1}({K_{c_1,c_2}})$, $(S(c_1) \bigcup S(c_2)) \bigcap ({c_1}^{-1}({K_{c_1,c_2}}) \bigcup {c_2}^{-1}({K_{c_1,c_2}}))$ is finite and $(c_1(S(c_1)) \bigcup c_2(S(c_2))) \bigcap K_{c_1,c_2}$ is discrete in $K_{c_1,c_2}$. For each point $p$ of $\mathbb{R}-(\{q_1\}\bigcup \{q_2\})$, we have its compact neighborhood $K_{p,c_1,c_2} \ni p$ in $\mathbb{R}-(\{q_1\}\bigcup \{q_2\})$. From this, we can see that $(\mathbb{R}-(\{q_1\}\bigcup \{q_2\})) \bigcap (c_1(S(c_1)) \bigcup c_2(S(c_2)))$ is a discrete and closed subset in $(\mathbb{R}-(\{q_1\}\bigcup \{q_2\}))
		\bigcap (c_1(S(c_1)) \bigcup c_2(S(c_2)))$.
		\end{proof}
%			Note that this is a part of the assumption of Theorem \ref{thm:2} {\rm (}\ref{thm:2.2}{\rm )}. This holds for the case as in Theorem \ref{thm:4} and \ref{thm:5}, presented later.
		
For such functions $c_1$ and $c_2$ of Proposition \ref{prop:0}, Morse functions and real analytic functions are important.
		
	The following class of smooth real-valued functions on $\mathbb{R}$ is a generalized class of the so-called
	{\it unimodal} real-valued functions. 
\begin{Def}
	A smooth function $c:\mathbb{R} \rightarrow \mathbb{R}$ is {\it almost-unimodal} if the following hold.
	\begin{itemize}
		\item $c$ has the maximum $c_{\rm M}$.
		\item There exists an interval  $\{t \mid a_0 \leq t \leq b_0\}$ and $c$ has the maximum $c_{\rm M}$ there. Furthermore, there exist a sequence $\{a_j\}_{j=1}^{\infty}$ such that $a_{j}<a_{j+1}$ for each $j \geq 0$ and that as $j$ diverges to $+\infty$, $a_j$ diverges to $+\infty$ and another sequence  $\{b_j\}_{j=1}^{\infty}$ such that $b_{j+1}<b_j$ for each $j \geq 0$ and that as $j$ diverges to $+\infty$, $b_j$ diverges to $-\infty$ enjoying the following.
		\begin{itemize}
			\item Let $c_{{\rm M},a,j}$ be the maximum of $c$ on the interval $\{t \mid a_j \leq t \leq a_{j+1}\}$. Then $\{c_{{\rm M},a,j}\}_{j=1}^{\infty}$ is a sequence such that $c_{{\rm M},a,j+1} \leq c_{{\rm M},a,j}$ for each $j \geq 0$. 
			\item Let $c_{{\rm M},b,j}$ be the maximum of $c$ on the interval $\{t \mid b_{j+1} \leq t \leq b_{j}\}$. Then $\{c_{{\rm M},b,j}\}_{j=1}^{\infty}$ is a sequence such that $c_{{\rm M},b,j+1} \leq c_{{\rm M},b,j}$ for each $j \geq 0$.
		\end{itemize} 
		
	\end{itemize}
	\end{Def}
Proposition \ref{prop:1} is seen as a part of our main result.
\begin{Prop}
	\label{prop:1}
	Let $c_1$ and $c_2$ be functions of Theorem \ref{thm:2} {\rm (}\ref{thm:2.2}{\rm )}. We abuse the notation. Suppose also the following.
	\begin{enumerate}
		\item \label{prop:1.1} The critical set of $c_1$ is unbounded or the function $-c_1$ is almost unimodal.

			\item \label{prop:1.2} The critical set of $c_2$ is unbounded or the function $c_2$ is almost unimodal.

	\end{enumerate}
Then, the Reeb space of the function ${\pi}_{m+1,1} {\mid}_{X_{m,c_1,c_2}}$ is regarded to be the Reeb graph and isomorphic to a finite or infinite graph.
\end{Prop}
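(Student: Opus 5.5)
Since $c_1,c_2$ satisfy the hypotheses of Theorem \ref{thm:2} (\ref{thm:2.2}), that theorem already shows that the Reeb space of ${\pi}_{m+1,1} {\mid}_{X_{m,c_1,c_2}}$ is regarded to be the Reeb graph and is a graph with ends. What remains is to exclude genuine ends, i.e.\ rays not terminating at a vertex, so that the Reeb space is an honest (finite or infinite) graph. The plan is to study the two directions $x_2 \to -\infty$ and $x_2 \to +\infty$ separately. By conditions (\ref{thm:2.2.1}, \ref{thm:2.2.2}), these are the only places where the function fails to be proper, so an edge with an end must come from a family of contours escaping to infinity along one of the two asymptotic directions of the strip $\overline{D_{c_1,c_2}}$.

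First I record the local structure. Over a point $(x_1,x_2) \in D_{c_1,c_2}$ the fiber of ${\pi}_{m+1,2} {\mid}_{X_{m,c_1,c_2}}$ is an $(m-2)$-sphere, and over boundary points it is a single point. Hence the contour through a level $x_1=t$ corresponds to a connected component of the horizontal slice $\{x_2 \mid c_1(x_2)\le t\le c_2(x_2)\}$, which is a union of closed intervals. By Theorem \ref{thm:2} (\ref{thm:2.1}), vertices correspond to slices passing through critical values of $c_1$ or $c_2$ at critical points. A non-compact edge would be a continuous family of such intervals whose endpoints run to $\pm\infty$ with no critical point of $c_1$ or $c_2$ met along the way.

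Next I split into the cases of (\ref{prop:1.1}) and (\ref{prop:1.2}) at each end. If the critical set of $c_i$ is unbounded toward, say, $+\infty$, then critical points $s_k \to +\infty$ give critical contours $(c_i(s_k),s_k,0)$. These accumulate only toward the level $q_2$, and condition (\ref{thm:2.2.3}) ensures that near $q_2$ the graph structure is locally finite. Any edge escaping to $+\infty$ must therefore pass through infinitely many of these vertices, so it is subdivided into compact edges, and no edge homeomorphic to a ray remains. If instead $c_2$ (resp.\ $-c_1$) is almost-unimodal, the monotone decreasing maxima $c_{{\rm M},a,j}$ (resp.\ the corresponding minima of $c_1$) should show the following: the slice components meeting $x_2\ge a_j$ are cut off from infinity, and every component containing an unbounded interval has an endpoint at a critical point. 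Along the upper boundary $c_2$, the level set $\{c_2\ge t\}$ for $t$ above $q_2$ is then compact, because $c_2$ attains its maximum and its local maxima decrease. So the contours near infinity are bounded above by the contour through a critical point.

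The main obstacle is the almost-unimodal case. There I must show rigorously that every edge of the graph structure coming from \cite{saeki1, saeki2} on the proper part has both endpoints at vertices, that is, that the closure of each edge in the Reeb space is compact. I would first try to show that the closure of the image of an edge under $\bar{{\pi}_{m+1,1} {\mid}_{X_{m,c_1,c_2}}}$ is a compact interval. Then I would use the monotonicity of the block maxima $c_{{\rm M},a,j}, c_{{\rm M},b,j}$ to trap the corresponding contours in a compact region of $\overline{D_{c_1,c_2}}$, combined with condition (\ref{thm:2.2.3}) to handle the levels $q_1,q_2$. Once compactness of edge closures is established, the Reeb graph is a graph in the sense defined in the introduction, finite or infinite according to the number of critical contours.
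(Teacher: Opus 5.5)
Your reduction is right. Theorem~\ref{thm:2} gives a Reeb graph with possible ends, so you must show that no edge has a ray as its closure. However, neither of the two steps that would actually exclude rays is carried out.

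In the unbounded case, the sentence ``any edge escaping to $+\infty$ must therefore pass through infinitely many of these vertices'' does not follow from what precedes it. The critical values $c_i(s_k)$ accumulating only at $q_2$, and (\ref{thm:2.2.3}) making the graph locally finite near that level, are compatible with a family of contours running off to $+\infty$ without ever containing a point $(c_i(s_k),s_k,{(0)}_{j=1}^{m-1})$.

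The missing idea is to look at the boundary of a putative ray edge. Its contours are preimages of slice intervals $[\alpha(t),\beta(t)]$. Their endpoints lie on the graphs $x_1=c_i(x_2)$ and, since no critical point is met, move continuously with $c_i(\alpha(t))=t=c_{i'}(\beta(t))$. If the contours escape to $+\infty$, the endpoints sweep out tails on which $c_i$, resp.\ $c_{i'}$, is strictly monotone. This forces $i\neq i'$, since two points of one strictly monotone tail cannot both take the value $t$. Hence the whole tails $\{(c_1(x),x)\mid x\geq R\}$ and $\{(c_2(x),x)\mid x\geq R\}$ consist of endpoints of contours of the edge. Only this puts the far critical points of $c_1$ and $c_2$ on the edge and gives the contradiction. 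More precisely, a ray at $+\infty$ exists only if $c_1$ and $c_2$ are strictly monotone on some $[R,\infty)$ and approach $q_2$ from the same side; symmetrically at $-\infty$.

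You leave the almost-unimodal case open, and the tools you propose miss the difficulty. The function ${\pi}_{m+1,1} {\mid}_{X_{m,c_1,c_2}}$ is bounded anyway, since $c_1,c_2$ have finite limits. Likewise $\{c_2\geq t\}$ is compact for $t>\max\{q_1,q_2\}$ by those limits alone, whereas rays live at levels tending to $q_1$ or $q_2$.

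What almost-unimodality actually gives is a statement about tails: $c_2$ is strictly increasing on no $[R,\infty)$ (the block maxima $c_{{\rm M},a,j}$ would then strictly increase) and strictly decreasing on no $(-\infty,R]$. Dually for $c_1$ when $-c_1$ is almost-unimodal. With the lemma above, (\ref{prop:1.2}) excludes the rays along which both functions approach the limit from below, and (\ref{prop:1.1}) those along which both approach from above. So the two hypotheses must be combined at each end rather than treated case by case; Example~\ref{ex:2} has $c_2$ unimodal and still two rays.

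The paper needs no analysis of ends at all. From $p_D$ it moves in the $x_1$-direction to the graph of $c_2$, then along that graph in the direction in which $c_2$ increases. By (\ref{prop:1.2}) this reaches a critical point after finite distance. Since $x_1$ is strictly increasing along this path, its image is a monotone path in the Reeb space from $p_e$ to a vertex. The dual path goes downward via $c_1$, so both ends of $e$ are vertices.

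Finally, your end-by-end split, like the paper's path construction, reads ``unbounded'' as unbounded toward the end under consideration. With only one-sided unboundedness a ray can occur at the other end. For example, take $c_2=\frac{1}{x^2+1}$ and $c_1=\frac{1}{2(x^2+1)}$ on $x\leq 0$, with $c_1$ changed on $x>0$ (keeping $0<c_1<c_2$) so as to have infinitely many critical points.
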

\begin{proof}
We choose a point $p$ of $X_{m,c_1,c_2}$ mapped to a point $p_e$ in some edge $e$ of the Reeb graph by the quotient map and a point $p_{D}:={\pi}_{2,1}(p) \in \overline{{D}_{m,c_1,c_2}}$. Let $p_{D}:=(p_{D,1},p_{D,2})$. We consider the point $p_{D,c_2}:=(c_2(p_{D,2}),p_{D,2})$. The set $\{(t,p_{D,2}) \mid p_{D,1} \leq t \leq c_2(p_{D,2})\}$ is in $\overline{{D}_{m,c_1,c_2}}$. Due to the assumption (\ref{prop:1.2}), we can consider some path from $p_{D,c_2}$ to another point $p_{s,c_2}$ in $\overline{{D}_{m,c_1,c_2}}-{D}_{m,c_1,c_2}$ and have the path departing from $p_{D}$ obtained by joining the set $\{(t,p_{D,2}) \mid p_{D,1} \leq t \leq c_2(p_{D,2})\}$, which is a one-point set or a segment, and the path in $\overline{{D}_{m,c_1,c_2}}-{D}_{m,c_1,c_2}$ in such a way that the restriction of ${\pi}_{2,1}$ to the resulting path is injective and that the point $(p_{s,c_2},{(0)}_{j=1}^{m-1}) \in {{\mathbb{R}}}^2 \times {\mathbb{R}}^{m-1}={\mathbb{R}}^{m+1}$ is a critical point of ${\pi}_{m+1,1} {\mid}_{X_{m,c_1,c_2}}$. 
Related to the assumption (\ref{prop:1.1}), we can argue similarly. 
We can see that the closure of $e$ in the Reeb graph is homeomorphic to $D^1$. More precisely, from the argument above, we naturally have a union of finitely many closures of edges of the Reeb graph with the union containing $p_e$.
\end{proof}
We explain some related examples.
\begin{Ex}\label{ex:1}
\begin{enumerate}
\item \label{ex:1.1} The case $c_1(x):=\frac{1}{2(x^2+1)} {\sin}^2 x$ and $c_2(x):=\frac{1}{x^2+1}$ is an explicit case of Proposition \ref{prop:1} with the critical set of $c_1$ being unbounded, $c_2$ being (almost) unimodal, and $(q_1,q_2)=(0,0)$. This is also from \cite[Theorem 1]{kitazawa7}.
\item \label{ex:1.2} The case $c_1(x):=-\frac{1}{2(x^2+1)} {\sin}^2 x$ and $c_2(x):=\frac{1}{x^2+1}$ seems to be an explicit case of Proposition \ref{prop:1} with the critical set of $c_1$ being unbounded, $c_2$ being almost unimodal, and $(q_1,q_2)=(0,0)$. However, the Reeb space is not homeomorphic to any graph with ends. Indeed, the condition (\ref{thm:2.2.3}) from Theorem \ref{thm:2} is dropped and the set ${{\pi}_{2,1}}^{-1}(q_i) \bigcap \overline{D_{c_1,c_2}}$ and ${{\pi}_{m+1,1}}^{-1}(q_i) \bigcap X_{m,c_1,c_2}$ are connected and non-compact. the value of the quotient map $q_{{\pi}_{m+1,1} {\mid}_{X_{m,c_1,c_2}}}:X_{m,c_1,c_2} \rightarrow R_{{\pi}_{m+1,1} {\mid}_{X_{m,c_1,c_2}}}$ at points in the set ${{\pi}_{m+1,1}}^{-1}(q_i) \bigcap X_{m,c_1,c_2}$, which is an unique element, is incident to infinitely many copies of $1$-cells, in the Reeb space. The Reeb space has the structure of a $1$-dimensional CW complex which is not locally finite or locally compact.
\end{enumerate}
	\end{Ex}
Hereafter, a real-valued smooth function $c_{I,\mathbb{R}}:I \rightarrow \mathbb{R}$ on $I \subset \mathbb{R}$, is {\it strictly increasing} ({\it decreasing}) if for any pair $(t_1,t_2)$ of real numbers $t_1<t_2$ in $I$, $c_{I,\mathbb{R}}(t_1)<c_{I,\mathbb{R}}(t_2)$ (resp. $c_{I,\mathbb{R}}(t_1)>c_{I,\mathbb{R}}(t_2)$) always holds.

In Proposition \ref{prop:1}, if the critical set of $c_2$ is bounded below and $c_2$ is almost-unimodal, then the restriction of $c_2$ to some interval $\{t \leq b_{\rm m}\} \subset \mathbb{R}$ is strongly increasing, and if the critical set of $c_2$ is bounded above and $c_2$ is almost-unimodal, then the restriction of $c_2$ to some interval $\{t \geq b_{\rm M}\} \subset \mathbb{R}$ is strongly decreasing. There if the critical set of $c_1$ is bounded below and $-c_1$ is almost-unimodal, then the restriction of $-c_1$ to some interval $\{t \leq b_{\rm m}\} \subset \mathbb{R}$ is strongly increasing (for distinct  numbers $t_1<t_2$ in the interval $-c_1(t_1)<-c_1(t_2)$), and if the critical set of $c_1$ is bounded above and $-c_1$ is almost-unimodal, then the restriction of $-c_1$ to some interval $\{t \geq b_{\rm M}\} \subset \mathbb{R}$ is strongly decreasing (for distinct  numbers $t_1<t_2$ in the interval $-c_1(t_1)>-c_1(t_2)$).

Related to this, we present two propositions. 

\begin{Prop}
	\label{prop:2}
	Let $c_1$ and $c_2$ be functions of Theorem \ref{thm:2} {\rm (}\ref{thm:2.2}{\rm )}. We abuse the notation. Suppose also that all of the following four hold.
	\begin{enumerate}
		\item \label{prop:2.1} The critical set of $c_2$ is unbounded below, or the restriction of $c_2$ to some interval of the form $\{t \leq b_{{\rm m},0}\} \subset \mathbb{R}$ is strongly increasing. 
\item  \label{prop:2.2} The critical set of $c_2$ is unbounded above, or the restriction of $c_2$ to some interval of the form $\{t \geq b_{{\rm M},0}\} \subset \mathbb{R}$ is strongly decreasing. 
		\item \label{prop:2.3} The critical set of $c_1$ is unbounded below, or the restriction of $-c_1$ to some interval of the form $\{t \leq b_{{\rm m},0}\} \subset \mathbb{R}$ is strongly increasing.
	
			\item \label{prop:2.4} The critical set of $c_1$ is unbounded above, or  the restriction of $-c_1$ to some interval of the form $\{t \geq b_{{\rm M},0}\} \subset \mathbb{R}$ is strongly decreasing.

	\end{enumerate}
Then, the Reeb space of the function ${\pi}_{m+1,1} {\mid}_{X_{m,c_1,c_2}}$ is regarded to be the Reeb graph and is isomorphic to some graph.
\end{Prop}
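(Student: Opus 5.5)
We follow the same scheme as the proof of Proposition \ref{prop:1}, replacing almost-unimodality by the monotonicity near each infinity. By Theorem \ref{thm:2} (\ref{thm:2.2}), the Reeb space of ${\pi}_{m+1,1} {\mid}_{X_{m,c_1,c_2}}$ is already the Reeb graph, a graph with ends. So it suffices to show that the closure of every edge $e$ is homeomorphic to $D^1$, i.e. that no edge is a ray. Equivalently, following $e$ in either direction of $\bar{{\pi}_{m+1,1} {\mid}_{X_{m,c_1,c_2}}}$, we must reach a vertex after finitely many steps and never escape to an end.

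Fix $p \in X_{m,c_1,c_2}$ with $q(p)=p_e \in e$, and put $p_D={\pi}_{m+1,2}(p)=(p_{D,1},p_{D,2})$. To go in the increasing direction of $x_1$, we first move horizontally from $p_D$ to $(c_2(p_{D,2}),p_{D,2})$ on the graph of $c_2$. We then move along the graph of $c_2$ in the direction of increasing $c_2$ until a critical point of $c_2$ is reached. The decreasing direction is symmetric, using $c_1$ and $-c_1$.

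The case analysis on $p_{D,2}$ runs as follows.
\begin{itemize}
\item If the critical set of $c_2$ is unbounded on the relevant side (below, by (\ref{prop:2.1}), or above, by (\ref{prop:2.2})), there is a critical point $s$ of $c_2$ beyond $p_{D,2}$. Take the nearest critical point $s$ in the direction in which $c_2$ increases from $p_{D,2}$. Between $p_{D,2}$ and $s$, $c_2$ is strictly monotone.
\item If instead the critical set is bounded on that side, then by (\ref{prop:2.1}) $c_2$ is strictly increasing on $\{t \le b_{{\rm m},0}\}$, and by (\ref{prop:2.2}) strictly decreasing on $\{t\ge b_{{\rm M},0}\}$. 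So $c_2$ increases when moving inward, toward the compact middle region.
\end{itemize}
Since at least one critical point exists and $c_2$ has no critical point between, the path along the graph of $c_2$ in the increasing direction reaches a critical point $(c_2(s),s)$ in finite time. It cannot run off to $\pm\infty$, because near infinity $c_2$ increases only inward. In either case we obtain a path in $\overline{D_{c_1,c_2}}$ on which ${\pi}_{2,1}$ is injective (monotone) and which ends at a point whose lift $(c_2(s),s,(0)_{j=1}^{m-1})$ is a critical point by Theorem \ref{thm:2} (\ref{thm:2.1}). The path lifts to $X_{m,c_1,c_2}$ via the point with $y_j=0$ on the boundary part and a continuous choice of $y$ in the interior segment. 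Its image in the Reeb space is then a monotone path from $p_e$ to a vertex.

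The main obstacle is the case where the path passes through the value $q_1$ or $q_2$, or where $p_{D,2}$ lies far out where $c_2$ is monotone. We must check that the image path does not cross the special level sets at $q_i$ in a way that escapes. Condition (\ref{thm:2.2.3}) of Theorem \ref{thm:2} makes neighborhoods of contours at levels $q_i$ finite and compact, so the image path stays inside a compact part of the graph. A monotone path in a graph from an edge point to a vertex then meets only finitely many closures of edges, exactly as in the last step of the proof of Proposition \ref{prop:1}.

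Doing this in both directions shows that both ends of $e$ are vertices at finite $\bar{{\pi}_{m+1,1} {\mid}_{X_{m,c_1,c_2}}}$-values. Hence $\overline{e}\cong D^1$, no ends occur, and the Reeb graph is a genuine (finite or infinite) graph.
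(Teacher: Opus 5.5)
Your proposal is correct and is essentially the paper's proof. The paper likewise joins the horizontal segment from $p_D$ to the graph of $c_2$ (resp.\ $c_1$) with an arc along that graph on which ${\pi}_{2,1}$ is injective and which ends at a critical point. It uses the four assumptions exactly to guarantee such a critical point in the direction of increase (resp.\ decrease), and then concludes as in Proposition \ref{prop:1} that every edge closure is homeomorphic to $D^1$. You simply spell out details the paper leaves implicit: choosing the nearest critical point, excluding escape to infinity via the monotonicity near $\pm\infty$, and lifting the path to $X_{m,c_1,c_2}$.
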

\begin{proof}
By the following exposition, our proof is essentially completed.

The assumptions (\ref{prop:2.1}--\ref{prop:2.4}) enable us to find paths like the path in the proof of Proposition \ref{prop:1} departing from $p_{D}$ obtained by joining the set $\{(t,p_{D,2}) \mid p_{D,1} \leq t \leq c_2(p_{D,2})\}$ ($\{(t,p_{D,2}) \mid c_2(p_{D,2}) \leq t \leq p_{D,1}\}$), which is a one-point set or a segment, and the path in $\overline{{D}_{m,c_1,c_2}}-{D}_{m,c_1,c_2}$ such that the restriction of ${\pi}_{2,1}$ to the resulting path is injective and that
the point $(p_{s,c_2},{(0)}_{j=1}^{m-1}) \in {\mathbb{R}}^2 \times {\mathbb{R}}^{m-1}={\mathbb{R}}^{m+1}$ is a critical point of ${\pi}_{m+1,1} {\mid}_{X_{m,c_1,c_2}}$.

\end{proof}
Note that this is an extended version of Proposition \ref{prop:1}.
\begin{Prop}
	\label{prop:3}
	Let $c_1$ and $c_2$ be functions of Theorem \ref{thm:2} {\rm (}\ref{thm:2.2}{\rm )}. We abuse the notation. Suppose also that at least one of the following holds.
	\begin{enumerate}
		\item \label{prop:3.1} The critical set of $c_2$ is bounded below, the restriction of $c_2$ to each interval of the form $\{t \leq b_{\rm m}\} \subset \mathbb{R}$ is not strongly increasing, and the critical set of $c_1$ is bounded below. 
\item  \label{prop:3.2} The critical set of $c_2$ is bounded above, the restriction of $c_2$ to each interval of the form $\{t \geq b_{\rm M}\} \subset \mathbb{R}$ is not strongly decreasing, and the critical set of $c_1$ is bounded above. 
		\item \label{prop:3.3} The critical set of $c_1$ is bounded below, the restriction of $-c_1$ to each interval of the form $\{t \leq b_{\rm m}\} \subset \mathbb{R}$ is not strongly increasing, and the critical set of $c_2$ is bounded below. 
	
			\item \label{prop:3.4} The critical set of $c_1$ is bounded above, the restriction of $-c_1$ to each interval of the form $\{t \geq b_{\rm M}\} \subset \mathbb{R}$ is not strongly decreasing, and the critical set of $c_2$ is bounded above.

	\end{enumerate}
Then, the Reeb space of the function ${\pi}_{m+1,1} {\mid}_{X_{m,c_1,c_2}}$ is regarded to be the Reeb graph and is not isomorphic to any finite or infinite graph.
\end{Prop}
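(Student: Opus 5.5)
The plan is to show that, under any one of (\ref{prop:3.1})--(\ref{prop:3.4}), the Reeb graph guaranteed by Theorem \ref{thm:2} (\ref{thm:2.2}) has an edge whose closure is a ray rather than a copy of $D^1$. Such an edge cannot occur in a graph in the sense of Section \ref{sec:1}, so the Reeb space is a graph with ends but is not isomorphic to any finite or infinite graph. By the symmetry $x_1 \mapsto -x_1$, which swaps the roles of $c_1$ and $-c_2$, together with $x_2 \mapsto -x_2$, which swaps ``below'' and ``above'', it suffices to treat case (\ref{prop:3.1}).

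First I fix the local picture near $x_2=-\infty$. In case (\ref{prop:3.1}), $S(c_1)\cup S(c_2)$ is bounded below, so there is $b$ with no critical point of $c_1$ or $c_2$ on $\{t\le b\}$. Both functions are then strictly monotone there. Since $c_2$ is not strongly increasing on any such interval, it must be strictly decreasing on $\{t \le b\}$. Because $c_2(t)\to q_1$ as $t\to-\infty$ by (\ref{thm:2.2.1}), this gives $c_2(t)<q_1$ for $t\le b$, with $c_2(t)$ increasing to $q_1$ as $t$ decreases. Also $c_1<c_2$, and $c_1$ is monotone with limit $q_1$. Hence the tail region $T:=\overline{D_{c_1,c_2}}\cap\{x_2\le b\}$ lies in $\{x_1<q_1\}$, and it meets the line $x_1=s$, for $s\in(\max(c_2(b),\dots),q_1)$ close to $q_1$, in a single segment. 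By monotonicity, this segment is $\{t<\tau_2(s)\}$ intersected with a half-line or with the whole line according to whether $c_1$ decreases or increases, and it is connected.

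Next I lift to $X_{m,c_1,c_2}$. Over a segment, the fibres of ${\pi}_{m+1,2}{\mid}_{X_{m,c_1,c_2}}$ are spheres $S^{m-2}$ collapsing to points on $\partial D_{c_1,c_2}$, so each such level piece is connected. Moreover, by Theorem \ref{thm:2} (\ref{thm:2.1}) it contains no critical point, since there are no critical points of $c_i$ for $t\le b$. I also need that this piece is a whole contour of level $s$, i.e.\ that it is not joined through $\{x_2=b\}$ to anything else. This holds after shrinking the range of $s$: for $s$ close to $q_1$ the segment ends strictly before $x_2=b$, because $c_2(b)<q_1$. This gives a continuous family of regular contours $C_s$, $s\in(s_0,q_1)$, and hence an arc $e\subset R$ on which $\bar{c}$ is injective with image $(s_0,q_1)$. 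The arc contains no vertex, so it lies in one edge.

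The main obstacle is to show that this edge has no endpoint on the $q_1$ side, i.e.\ that $q(C_s)$ has no limit in the Reeb space as $s\to q_1$. I would argue as follows. Since $C_s\subset\{x_2<\tau_2(s)\}$ and $\tau_2(s)\to-\infty$, the contours $C_s$ leave every compact set of $X_{m,c_1,c_2}$. Suppose $q(C_s)$ converged to some point. That point would have to be a contour at level $q_1$, and by (\ref{thm:2.2.3}) it would have a compact saturated neighborhood $N$. Then $C_s \subset N$ for $s$ near $q_1$ would contradict the escape of $C_s$ to infinity. Alternatively, the level $q_1$ simply does not meet $T$, and I would use the local finiteness in the proof of Theorem \ref{thm:2}. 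Thus the closure of $e$ is homeomorphic to $\{x\ge0\}$: the other end of $e$ is a vertex or continues via properness over $\mathbb{R}-\{q_1,q_2\}$, and the ray-end is genuine. Consequently the Reeb graph is a graph with ends having at least one ray edge, and it is not isomorphic to any finite or infinite graph.
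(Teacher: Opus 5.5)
Your proposal is correct and takes essentially the same approach as the paper. You reduce by symmetry to case (\ref{prop:3.1}) and find a tail $\{t\le b\}$ free of critical points on which $c_2$ is strictly decreasing; so is $c_1$, as the paper notes, since $c_1<c_2<q_1$ there rules out your ``$c_1$ increasing'' branch. The regular contours over $\{x_2\le b,\ c_2(b)<x_1\}$ then form an edge that cannot close up at the $q_1$ end, so its closure is a ray rather than $D^1$. Your write-up is more detailed than the paper's, and the one step to tighten is that the image of $\mathrm{int}\,N$ is an open neighbourhood of the putative limit point in the Reeb space: this holds because $\mathrm{int}\,N$ is saturated, all contours of $N$ other than $K$ being compact regular contours with product neighbourhoods.
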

\begin{proof}
The Reeb space of the function ${\pi}_{m+1,1} {\mid}_{X_{m,c_1,c_2}}$ is regarded to be the Reeb graph from Theorem \ref{thm:2}.

Suppose that (\ref{prop:3.1}) holds. 

In this case, the critical set of $c_2$ is bounded below and the restriction of $c_2$ to each interval of the form $\{t \leq b_{\rm m}\} \subset \mathbb{R}$ is not strongly increasing. We can see that the restriction of $c_2$ to some interval $\{t \leq b_{{\rm m},0}\} \subset \mathbb{R}$ is strongly decreasing and that there exist no critical point of $c_1$ on $\{t \leq b_{{\rm m},0}\} \subset \mathbb{R}$.
We can also see that for $i=1,2$, the restriction of $c_i$ to the interval $\{t \leq b_{{\rm m},0}\} \subset \mathbb{R}$ is strongly decreasing.

The set $\{(x_1,x_2) \mid x_2 \leq b_{{\rm m},0}, c_2(b_{{\rm m},0}) \leq x_1 \leq c_2(x_2), c_1(x_2) \leq x_1 \leq c_2(x_2) \}$ is connected and in $\overline{D_{c_1,c_2}}$, and contains no critical point of $c_i$ for $i=1,2$. From this local property, we cannot construct paths as in the proofs of Propositions \ref{prop:1} and \ref{prop:2} and we can see that the Reeb space of the function ${\pi}_{m+1,1} {\mid}_{X_{m,c_1,c_2}}$ is regarded to be the Reeb graph and that it is not isomorphic to any finite or infinite graph.

By a kind of symmetry, we can discuss similarly in the remaining three cases (\ref{prop:3.2}--\ref{prop:3.4}).

This completes the proof.
\end{proof}

The following is one of our main result.
\begin{Thm}
\label{thm:3}
Let $c_1$ and $c_2$ be functions of Theorem \ref{thm:2} {\rm (}\ref{thm:2.2}{\rm )}. We abuse the notation.

The Reeb space of the function ${\pi}_{m+1,1} {\mid}_{X_{m,c_1,c_2}}$ is regarded to be the Reeb graph. It is isomorphic to some graph if and only if the following four hold.

\begin{enumerate}
\item \label{thm:3.1} Either the condition {\rm (}\ref{prop:2.1}{\rm )} of Proposition \ref{prop:2} holds, or in the condition {\rm (}\ref{prop:3.1}{\rm )} of Proposition \ref{prop:3}, the critical set of $c_2$ is bounded below, the restriction of $c_2$ to each interval of the form $\{t \leq b_{\rm m}\} \subset \mathbb{R}$ is not strongly increasing, and instead, the critical set of $c_1$ is unbounded below.  
\item \label{thm:3.2} Either the condition {\rm (}\ref{prop:2.2}{\rm )} of Proposition \ref{prop:2} holds, or in the condition {\rm (}\ref{prop:3.2}{\rm )} of Proposition \ref{prop:3}, the critical set of $c_2$ is bounded above, the restriction of $c_2$ to each interval of the form $\{t \geq b_{\rm M}\} \subset \mathbb{R}$ is not strongly decreasing, and instead, the critical set of $c_1$ is unbounded above. 
\item \label{thm:3.3} Either the condition {\rm (}\ref{prop:2.3}{\rm )} of Proposition \ref{prop:2} holds, or in the condition {\rm (}\ref{prop:3.3}{\rm )} of Proposition \ref{prop:3}, the critical set of $c_1$ is bounded below, the restriction of $-c_1$ to each interval of the form $\{t \leq b_{\rm m}\} \subset \mathbb{R}$ is not strongly increasing, and instead, the critical set of $c_2$ is unbounded below. 
\item \label{thm:3.4} Either the condition {\rm (}\ref{prop:2.4}{\rm )} of Proposition \ref{prop:2} holds, or in the condition {\rm (}\ref{prop:3.4}{\rm )} of Proposition \ref{prop:3}, the critical set of $c_1$ is bounded above, the restriction of $-c_1$ to each interval of the form $\{t \geq b_{\rm M}\} \subset \mathbb{R}$ is not strongly decreasing, and instead, the critical set of $c_2$ is unbounded above. 
\end{enumerate}
\end{Thm}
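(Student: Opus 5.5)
The plan is to combine Theorem \ref{thm:2} with the path constructions in the proofs of Propositions \ref{prop:1}, \ref{prop:2} and \ref{prop:3}. Treat each of the four ``ends'' of $\overline{D_{c_1,c_2}}$ separately: the upper boundary $c_2$ near $-\infty$ and near $+\infty$, and the lower boundary $c_1$ near $-\infty$ and near $+\infty$. Theorem \ref{thm:2} (\ref{thm:2.2}) already says the Reeb space is the Reeb graph and a graph with ends. So the only question is when every edge has closure homeomorphic to $D^1$, i.e.\ when no edge is a ray (an ``end''). By Theorem \ref{thm:2} (\ref{thm:2.1}), the vertices come from the points $(c_i(s),s,(0)_{j=1}^{m-1})$ with $s\in S(c_i)$, together with the contours over $q_1,q_2$.

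For sufficiency, assume (\ref{thm:3.1})--(\ref{thm:3.4}). Take a point $p_e$ in an edge $e$, a representative $p$, and $p_D=(p_{D,1},p_{D,2})$. As in the proof of Proposition \ref{prop:1}, move horizontally from $p_D$ to $(c_2(p_{D,2}),p_{D,2})$ and then along the boundary curve, so that the $x_1$-coordinate increases monotonically, until a critical point of the boundary is reached. When the first alternative of (\ref{thm:3.1}) or (\ref{thm:3.2}) holds, this is exactly the argument of Proposition \ref{prop:2}. In the second alternative, $c_2$ is eventually strongly decreasing towards $-\infty$ (respectively increasing towards $+\infty$), but $S(c_1)$ is unbounded there. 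In that case, cross vertically within $\overline{D_{c_1,c_2}}$ to the lower boundary at a parameter beyond which $c_1$ has critical points with value at least the current level. Then use the asymptotics $c_1,c_2\to q_i$ to find a critical value of $c_1$ that is reached monotonically; if no such value exists, the edge terminates at the vertex over $q_i$, which lies in a compact neighborhood by (\ref{thm:2.2.3}). Doing this in both directions of $\bar{c}$ yields finitely many closures of edges containing $p_e$, each with compact closure. Hence every edge closure is $D^1$ and the Reeb space is a graph.

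For necessity, argue by contraposition. If one of (\ref{thm:3.1})--(\ref{thm:3.4}) fails, then the corresponding condition of Proposition \ref{prop:3} holds. For example, failure of (\ref{thm:3.1}) means that $S(c_2)$ is bounded below, $c_2$ is not eventually strongly increasing at $-\infty$, and $S(c_1)$ is bounded below. This is exactly (\ref{prop:3.1}), and Proposition \ref{prop:3} then gives a non-compact edge, so the Reeb space is not a graph. The other three cases follow by the symmetries $x_2\mapsto -x_2$ and $(x_1,c_1,c_2)\mapsto(-x_1,-c_2,-c_1)$.

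I expect the main obstacle to be the sufficiency step in the mixed alternative, where $c_2$ has no critical points near $-\infty$ but $c_1$ has unboundedly many. There one must verify that the contour through $p$, followed monotonically in $x_1$, actually meets a critical contour coming from $c_1$ rather than escaping to the end. I would first try to use the monotonicity of $c_2$ together with the convergence $c_1\to q_1$. Either the levels between $p_{D,1}$ and $q_1$ contain critical values of $c_1$ attained at points connected to $p$ through $D_{c_1,c_2}$, or the edge ends at the vertex over $q_1$. The compactness of $N_{c_1,c_2,q_1,j}$ from (\ref{thm:2.2.3}) rules out a ray in the second case.
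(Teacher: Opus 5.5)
This is essentially the paper's proof. Sufficiency uses the monotone paths of Propositions \ref{prop:1} and \ref{prop:2}, and the mixed alternative is handled exactly as the paper does it: follow the level set $x_1=p_{D,1}$ to the graph of $c_1$, then climb along $c_1$ to a critical point. Necessity is by contraposition to Proposition \ref{prop:3}, and your observations that the negation of (\ref{thm:3.1}) is literally (\ref{prop:3.1}), and that the two symmetries transfer the other cases, are correct.

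The only thing to fix is the fallback in your mixed case ("otherwise the edge ends at the vertex over $q_1$"). Drop it rather than justify it via (\ref{thm:2.2.3}), because that condition does not exclude escaping contours: Proposition \ref{prop:3} produces rays while (\ref{thm:2.2.3}) holds. The fallback is also never needed. Since $c_2$ is strictly decreasing on $(-\infty,p_{D,2}]$, the level segment stays in $\overline{D_{c_1,c_2}}$ until it meets the graph of $c_1$, and it must meet it because $c_1\to q_1>c_2(p_{D,2})\geq p_{D,1}$. From that point, moving left, $c_1$ increases strictly up to its nearest critical point, which exists because $S(c_1)$ is unbounded below.
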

\begin{proof}
The Reeb space of the function ${\pi}_{m+1,1} {\mid}_{X_{m,c_1,c_2}}$ is regarded to be the Reeb graph of course.

We prove the latter statement.

We prove the sufficiency.

We consider the condition (\ref{thm:3.1}). In the condition {\rm (}\ref{prop:3.1}{\rm )} of Proposition \ref{prop:3}, suppose that the critical set of $c_2$ is bounded below, that the restriction of $c_2$ to each interval of the form $\{t \leq b_{\rm m}\} \subset \mathbb{R}$ is not strongly increasing, and that the critical set of $c_1$ is unbounded below. We abuse the notation of Proposition \ref{prop:3} and its proof. The set $\{(x_1,x_2) \mid x_2 \leq b_{{\rm m},0}, c_2(b_{{\rm m},0}) \leq x_1 \leq c_2(x_2), c_1(x_2) \leq x_1 \leq c_2(x_2) \}$ is connected and in $\overline{D_{c_1,c_2}}$, and contains critical points of $c_1$, which is also unbounded below.

Hereafter, we also abuse some notation from Proposition \ref{prop:1} and its proof.
We choose a point $p$ of $X_{m,c_1,c_2}$ mapped to a point $p_e$ in some edge $e$ of the Reeb graph by the quotient map and a point $p_{D}:={\pi}_{2,1}(p) \in \{(x_1,x_2) \mid x_2 \leq b_{{\rm m},0}, c_2(b_{{\rm m},0}) \leq x_1 \leq c_2(x_2), c_1(x_2) \leq x_1 \leq c_2(x_2) \}$. Let $p_{D}:=(p_{D,1},p_{D,2})$. We consider a point $p_{D,c_1,q}:=(p_{D,1},q_{D,1})$ with $c_1(q_{D,1})=p_{D,1} \leq p_{D,2}$ and we can have such a point due to our assumption that the values $c_1(t)$ and $c_2(t)$ converge to $q_1$ as $t$ diverges to $-\infty$. The set $\{(p_{D,1},t) \mid q_{D,1} \leq t \leq p_{D,2}\}$ is in $\{(x_1,x_2) \mid x_2 \leq b_{{\rm m},0}, c_2(b_{{\rm m},0}) \leq x_1 \leq c_2(x_2), c_1(x_2) \leq x_1 \leq c_2(x_2) \}$. We can consider some path from $p_{D,c_1,q}$ to another point $p_{s,c_1,q}$ in $\{(x_1,x_2) \mid x_2<b_{{\rm m},0}, c_1(x_2)=x_1\}$ and have the path departing from $p_{D}$ obtained by joining the set $\{(p_{D,1},t) \mid q_{D,1} \leq t \leq p_{D,2}\}$, which is a one-point set or a segment, and the path in $\{(x_1,x_2) \mid x_2 \leq b_{{\rm m},0}, c_2(b_{{\rm m},0}) \leq x_1 \leq c_2(x_2), c_1(x_2) \leq x_1 \leq c_2(x_2) \}$ in such a way that the restriction of ${\pi}_{2,1}$ to the latter path is injective and that the point $(p_{s,c_1,q},{(0)}_{j=1}^{m-1}) \in {{\mathbb{R}}}^2 \times {\mathbb{R}}^{m-1}={\mathbb{R}}^{m+1}$ is a critical point of ${\pi}_{m+1,1} {\mid}_{X_{m,c_1,c_2}}$. 

We discuss a fact obtained from the argument on the assumption {\rm (}\ref{prop:3.1}{\rm )} of Proposition \ref{prop:3} here. 
First, the assumption {\rm (}\ref{prop:3.2}{\rm )} of Proposition \ref{prop:3} is essentially same as {\rm (}\ref{prop:3.1}{\rm )} of Proposition \ref{prop:3}. We remember some arguments in the proof of Proposition \ref{prop:1} (and Proposition \ref{prop:2}) in the following arguments. From the assumptions (\ref{thm:3.1}, \ref{thm:3.2}) of the present theorem, for a (more) general point $p$ of $X_{m,c_1,c_2}$ mapped to a point $p_e$ in some edge $e$ of the Reeb graph by the quotient map and a point $p_{D}:={\pi}_{2,1}(p) \in \overline{{D}_{m,c_1,c_2}}$, we can have an oriented path from $p_e$ to a vertex of the Reeb graph the orientation at each point of which is same as the orientation induced from the Reeb digraph in the following way. We can choose finitely many sets of the form $\{(t,{p_{D,2}}^{\prime}) \mid {p_{D,1}}^{\prime} \leq t \leq c_2({p_{D,2}}^{\prime})\}$ containing a point ${p_{D}}^{\prime}:=({p_{D,1}}^{\prime},{p_{D,2}}^{\prime})$ or $\{({p_{D,1}}^{\prime},t) \mid {q_{D,1}}^{\prime} \leq t \leq {p_{D,2}}^{\prime}\}$ containing a point ${p_{D}}^{\prime}:=({p_{D,1}}^{\prime},{p_{D,2}}^{\prime})$ with the relation ${p_{D,1}}^{\prime}:=c_1({q_{D,1}}^{\prime})$ in $\overline{{D}_{m,c_1,c_2}}$ and a curve in the graph $\{(c_i(x),x) \mid x \in \mathbb{R}\}$ entering a point $p_{s,c}$ making $(p_{s,c},{(0)}_{j=1}^{m-1}) \in {\mathbb{R}}^2 \times {\mathbb{R}}^{m-1}={\mathbb{R}}^{m+1}$ a critical point of ${\pi}_{m+1,1} {\mid}_{X_{m,c_1,c_2}}$ and as a result we can have a path in $\overline{{D}_{m,c_1,c_2}}$ mapped onto a desired path connecting $p_e$ and a desired vertex in the Reeb graph by the quotient map $q_{{\pi}_{2,1} {\mid}_{\overline{D_{c_1,c_2}}}}:\overline{D_{c_1,c_2}} \rightarrow R_{{\pi}_{2,1} {\mid}_{\overline{D_{c_1,c_2}}}}$.  The assumptions (\ref{thm:3.3}, \ref{thm:3.4}) of the present theorem are regarded to be a kind of duals to the assumptions (\ref{thm:3.1}, \ref{thm:3.2}) of the present theorem. We can argue as in the proofs of Propositions \ref{prop:1} and \ref{prop:2} to complete the proof of the sufficiency.

We discuss the necessity. The necessity is due to Proposition \ref{prop:3}, its proof, and the related arguments above. This completes the proof of the necessity.

This completes the proof. 

\end{proof}
Theorem \ref{thm:3} is also regarded as an extension of Proposition \ref{prop:1}. Propositions \ref{prop:2} and \ref{prop:3} with their proofs are important in Theorem \ref{thm:3} and its proof. 
We only show an example for Proposition \ref{prop:3}.
\begin{Ex}\label{ex:2}
The case $c_1(x):=\frac{t_0}{x^2+1}$ ($0<t_0<1$) and $c_2(x):=\frac{1}{x^2+1}$ is an explicit case of Proposition \ref{prop:3} with the critical sets of $c_1$ and $c_2$ being bounded and one-point sets and $(q_1,q_2)=(0,0)$. 
If we consider a weaker condition $t_0<1$, then we can also have the Reeb digraph with exactly one edge, exactly two vertices, and the closure of the edge being homeomorphic to $D^1$, and in the case $t_0 \leq 0$, the assumption (\ref{thm:2.2.3}) of Theorem \ref{thm:2} (Theorem \ref{thm:2} (\ref{thm:2.2})) is also dropped. 
	\end{Ex}
\begin{Thm}
\label{thm:4}
In Theorem \ref{thm:1}, suppose that the set $c_1(S(c_1)) \bigcup c_2(S(c_2))$ is a discrete and closed subset of $\mathbb{R}$, that at least one function of $c_1$ and $c_2$ has at least one critical point and that both the values $c_1(t)$ and $c_2(t)$ diverge to $+\infty$ or $-\infty$ as $t$ diverges to $\pm \infty$.
Then the Reeb space of the function ${\pi}_{m+1,1} {\mid}_{X_{m,c_1,c_2}}$ is regarded to be the Reeb graph, where we abuse the notation. In addition, the Reeb graph is isomorphic to a graph if and only if the set $S(c_1) \bigcup S(c_2) \subset \mathbb{R}$ is unbounded.	
\end{Thm}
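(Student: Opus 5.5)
The plan has two parts. First we show that the Reeb space of $f:={\pi}_{m+1,1} {\mid}_{X_{m,c_1,c_2}}$ is the Reeb graph. Then we show that, with vertices at the critical contours, it is a genuine graph exactly when $S(c_1) \bigcup S(c_2)$ is unbounded.

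\emph{Step 1: properness.} Both $c_1(t)$ and $c_2(t)$ diverge to $\pm\infty$ as $t \to \pm\infty$. For a compact interval $K \subset \mathbb{R}$, the set $\{x_2 \mid c_1(x_2) \leq x_1 \leq c_2(x_2) \text{ for some } x_1 \in K\}$ is therefore bounded. The $y$-coordinates are bounded by $\sqrt{(x_1-c_1(x_2))(c_2(x_2)-x_1)}$. Hence $f^{-1}(K)$ is compact and $f$ is proper on all of $X_{m,c_1,c_2}$. This replaces the local analysis near $q_1,q_2$ in condition (\ref{thm:2.2.3}) of Theorem \ref{thm:2}; there are no exceptional values now.

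\emph{Step 2: Reeb graph structure.} By Theorem \ref{thm:2} (\ref{thm:2.1}), the critical points of $f$ are exactly the points $(c_i(s),s,(0)_{j=1}^{m-1})$ with $s \in S(c_i)$. Properness together with the hypothesis that $c_1(S(c_1)) \bigcup c_2(S(c_2))$ is discrete and closed means every compact interval of values contains finitely many critical values. Each critical level set is compact, and every critical value is attained at some critical point. Following the proof of Theorem \ref{thm:2} and invoking \cite[Theorem 3.1]{saeki1} and \cite[Theorems 2.1 and 2.8]{saeki2}, the Reeb space is a Hausdorff, locally finite graph (possibly with ends) whose vertices are the critical contours. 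Since at least one critical point exists, this is the Reeb graph.

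\emph{Step 3: the criterion.} Because $f$ is proper, every contour is compact. The question is whether any edge is a ray, i.e.\ an end.

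\emph{Sufficiency.} Suppose $S(c_1) \bigcup S(c_2)$ is unbounded. Adapt the path argument of Proposition \ref{prop:1} and Theorem \ref{thm:3}. Take a point $p_e$ on an edge $e$ and $p_D=(p_{D,1},p_{D,2})$. Move horizontally from $p_D$ to the boundary curve $\{(c_i(x),x)\}$, then along that curve monotonically in $x_1$ in both directions. The aim is to reach critical points of $f$ with $x_1$-value above and below $p_{D,1}$. This bounds $e$ on both sides and makes its closure a copy of $D^1$.

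The main obstacle is direction. We need critical values on both sides of each edge, not just critical points far away in $x_2$. If $S(c_1)$ is unbounded above, the critical values $c_1(s)$ with $s \to +\infty$ tend to $\pm\infty$. So for every $p_{D,1}$ the boundary arc starting from $p_D$ eventually meets a critical point of $f$ in the relevant direction. I would argue by cases on the signs of the limits at $\pm\infty$: both $+\infty$, both $-\infty$, or opposite signs. In each case the existing unbounded critical set supplies critical values beyond any level in the direction that an unbounded regular edge would have to go.

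\emph{Necessity.} If $S(c_1) \bigcup S(c_2) \subset [-B,B]$, then for $|x_2|>B$ both $c_i$ are strictly monotone. The region $\{x_2 > B\}$, or $\{x_2 < -B\}$, cut by levels beyond all critical values, is a product of a level set with a ray. As in Proposition \ref{prop:3}, the corresponding edge of the Reeb space contains no vertex at its unbounded side, so it is a ray. The Reeb graph is therefore a graph with an end and not a graph.
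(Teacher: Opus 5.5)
Your Steps 1 and 2 and your necessity argument are fine and follow the paper. The sufficiency step fails at the sentence ``in each case the existing unbounded critical set supplies critical values beyond any level in the direction that an unbounded regular edge would have to go.'' A ray of the Reeb space lives at a definite end of the $x_2$-axis. The boundary arc you follow from $p_D$ may run off to an end where $c_1,c_2$ have no critical points at all.

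Precisely, suppose an edge contains the contours over all levels $v>v_0$. Their images in $\overline{D_{c_1,c_2}}$ are $\{v\}\times[\alpha(v),\beta(v)]$, with $c_1\le v\le c_2$ on $[\alpha(v),\beta(v)]$. Since $c_2$ is bounded on compact sets, these intervals eventually avoid every $[-R,R]$, so by continuity they escape to one end, say $x_2\to+\infty$. Each endpoint moves along a single boundary curve on which the corresponding $c_i'$ never vanishes, because the contours are regular. So that $c_i$ is monotone on a half-line $[T,\infty)$. A left endpoint has $c_1'<0$ or $c_2'>0$, and a right endpoint has $c_1'>0$ or $c_2'<0$, so the two endpoints cannot lie on the same curve. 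Hence neither $c_1$ nor $c_2$ has critical points near $+\infty$. In other words, only unboundedness of $S(c_1)\cup S(c_2)$ \emph{above} excludes such a ray, and the case $v\to-\infty$ is identical.

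A concrete failure is
\[
c_1(t)=t,\qquad c_2(t)=t+3+\frac{2\sin t}{1+e^{-t}}.
\]
Here $c_2-c_1>1$. As $t\to-\infty$ we have $c_2'(t)\to1$, and as $t\to+\infty$ we have $c_2'(t)-(1+2\cos t)\to0$. So $S(c_1)\cup S(c_2)=S(c_2)$ is unbounded above and bounded below. By analyticity and $c_2(t)>t+1$, the critical values form a discrete closed set. Thus every hypothesis of Theorem~\ref{thm:4} holds (Case 4-2).

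Yet for all sufficiently negative $v$ the level set is a single regular contour over $\{v\}\times[c_2^{-1}(v),v]$, with the inverse taken on a half-line where $c_2'>0$. So the edge containing these contours has no vertex below it, and its closure is a ray. When both limits are $+\infty$, the two arms $x_2\to+\infty$ and $x_2\to-\infty$ similarly carry separate upward rays.

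So no case analysis on the signs of the limits can rescue the step: the ``if'' direction is false with ``unbounded'' read literally. The paper's own sufficiency argument (monotone paths to critical points as in the proof of Theorem~\ref{thm:3}) does not address which end the critical points accumulate at either. What your method does prove, once you track ends, is a corrected criterion: the Reeb graph is a graph if and only if $S(c_1)\cup S(c_2)$ is unbounded both above and below. Sufficiency is the endpoint argument above, applied at each end separately. For necessity, your product-with-a-ray argument only uses monotonicity of $c_1,c_2$ near one end, so boundedness of $S(c_1)\cup S(c_2)$ on one side already yields a ray.
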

\begin{proof}
By the structures of the functions and the maps, the function ${\pi}_{m+1,1} {\mid}_{X_{m,c_1,c_2}}$ is proper.

A proof of the former part of this theorem is shown like Theorem \ref{thm:2} (\ref{thm:2.2}). 

We prove the latter part.  \\
\ \\
Case 4-1 The case $c_1(t)$ and $c_2(t)$ go to $+\infty$ as $t$ diverges to $\pm \infty$. \\
This case contains the case $c_1(t)$ and $c_2(t)$ go to $-\infty$ as $t$ diverges to $\pm \infty$. In fact, it is sufficient to reverse the signs of the functions.

We discuss the sufficiency.

In the case the set $S(c_1) \bigcup S(c_2) \subset \mathbb{R}$ is unbounded, we can construct paths like the path in the proof of Theorem \ref{thm:3}. By abusing the notation and referring to the related part of the proof again, we explain the path. The path starts from $p_{D}$ and it is obtained by joining the set $\{(p_{D,1},t) \mid q_{D,1} \leq t \leq p_{D,2}\}$, which is a one-point set or a segment, and the path in $\{(x_1,x_2) \mid x_2 \leq b_{{\rm m},0}, c_2(b_{{\rm m},0}) \leq x_1 \leq c_2(x_2), c_1(x_2) \leq x_1 \leq c_2(x_2) \}$ such that the restriction of ${\pi}_{2,1}$ to the latter path is injective and that the point $(p_{s,c_1,q},{(0)}_{j=1}^{m-1}) \in {{\mathbb{R}}}^2 \times {\mathbb{R}}^{m-1}={\mathbb{R}}^{m+1}$ is a critical point of ${\pi}_{m+1,1} {\mid}_{X_{m,c_1,c_2}}$. 
For a general point $p$ of $X_{m,c_1,c_2}$ mapped to a point $p_e$ in some edge $e$ of the Reeb graph by the quotient map and a point $p_{D}:={\pi}_{2,1}(p) \in \overline{{D}_{m,c_1,c_2}}$, we can have an oriented path from $p_e$ to a vertex of the Reeb graph the orientation at each point of which is same as the orientation induced from the Reeb digraph, as we first argue in the proof of Proposition \ref{prop:1} (and Proposition \ref{prop:2}).

The functions $c_1$ and $c_2$ are bounded below and due to this together with the arguments in the proofs of some Propositions and Theorems above, we have another oriented path from $p_e$ to a vertex of the Reeb graph the orientation at each point of which is different from the orientation induced from the Reeb digraph.

This completes the proof of the sufficiency in this case.

The necessity is, as in the proof of Theorem \ref{thm:3}, essentially due to the proof of Proposition \ref{prop:3}. For this, we also remember the behaviors of the functions $c_1$ and $c_2$ at each infinity. This completes the proof of the necessity in this case.

\ \\
Case 4-2 The case both $c_1(t)$ and $c_2(t)$ go to $-\infty$ ($+\infty$) as $t$ diverges to $-\infty$ (resp. $+\infty$) . \\
This case contains the case $c_1(t)$ and $c_2(t)$ go to $+\infty$ ($-\infty$) as $t$ diverges to $-\infty$ (resp. $+\infty$). In fact, it is sufficient to reverse the signs of the functions, as in Case 4-1.

The necessity, is, as in Case 4-1. We discuss the sufficiency. For this, due to the assumptions on the behaviors of $c_1$ and $c_2$ at each infinity, we can argue as Case 4-1 and complete the proof of the sufficiency in Case 4-2. \\
\ \\
This completes the proof.
\end{proof}
\begin{Ex}

\label{ex:3}
We review Example \ref{ex:1}. We have the first derivative of $c_1$ as ${c_1}^{\prime}(x)=\frac{2x}{{(x^2+1)}^2}{(\sin x)}^2+\frac{2\sin x \cos x}{x^2+1}$, which is bounded and converges to $0$ at each infinity. We have the first derivative of $c_2$ as ${c_2}^{\prime}(x)=\frac{2x}{{(x^2+1)}^2}$, which is bounded and converges to $0$ at each infinity.
By rotating the two graphs in each case from Example \ref{ex:1} (\ref{ex:1.1}, \ref{ex:1.2}) simultaneously, slightly, and suitably, we can easily have a case for Theorem \ref{thm:4} with the following conditions.
\begin{itemize}
\item The resulting Reeb graph is not isomorphic to any graph. 
\item The resulting case is for Case 4-2 in (the proof of) Theorem \ref{thm:4}.
\end{itemize}
\end{Ex}

\begin{Thm}
\label{thm:5}
In Theorem \ref{thm:1}, suppose that the intersection $(\mathbb{R}-\{q\}) \bigcap (c_1(S(c_1)) \bigcup c_2(S(c_2)))$ is a discrete and closed subset of $\mathbb{R}-\{q\}$ and that at least one function of $c_1$ and $c_2$ has at least one critical point. Suppose also the following.
			
			\begin{enumerate}
				\item \label{thm:5.1}
				As $t \in \mathbb{R}$ diverges to $-\infty$ {\rm (}$+\infty${\rm )}, then both the values $c_1(t)$ and $c_2(t)$ converge to $q \in \mathbb{R}$.
				\item \label{thm:5.2} As $t \in \mathbb{R}$ diverges to $+\infty$  {\rm (}resp. $-\infty${\rm )}, then both the values $c_1(t)$ and $c_2(t)$ go to $+\infty$ or both the values go to $-\infty$.
				
				\item \label{thm:5.3} Each connected component $K_{c_1,c_2,q,j}$ of the set ${{\pi}_{m+1,1}}^{-1}(q) \bigcap X_{m,c_1,c_2}$ has a small compact and connected neighborhood $N_{c_1,c_2,q,j}$ in $X_{m,c_1,c_2}$ consisting of contours of the the restriction ${\pi}_{m+1,1} {\mid}_{X_{m,c_1,c_2}}$ and exactly one contour $K_{c_1,c_2,q,j}$ in a level set of the form ${{\pi}_{m+1,1}}^{-1}(q) \bigcap X_{m,c_1,c_2}$. The complementary set of $K_{c_1,c_2,q,j}$ in the connected component $N_{c_1,c_2,q,j}$ consists of finitely many connected sets and contains no critical contour of ${\pi}_{m+1,1} {\mid}_{X_{m,c_1,c_2}}$. In addition, these neighborhoods $N_{c_1,c_2,q,j}$ can be chosen to be mutually disjoint.
				
			\end{enumerate}
		Then the Reeb space of the function ${\pi}_{m+1,1} {\mid}_{X_{m,c_1,c_2}}$ is regarded to be the Reeb graph. Furthermore, the Reeb graph is a graph if and only if the following two hold.

\begin{enumerate}
\setcounter{enumi}{3}
\item \label{thm:5.4}
The assumptions of {\rm (}\ref{thm:3.1}, \ref{thm:3.3}{\rm )} {\rm (}resp. {\rm (}\ref{thm:3.2}, \ref{thm:3.4}{\rm )}{\rm )} of Theorem \ref{thm:3} are satisfied with $q=q_1$ {\rm (}resp. $q=q_2${\rm )}. Note that such conditions can be discussed naturally in the present situation.
\item \label{thm:5.5} The set $S(c_1) \bigcup S(c_2)$ is unbounded above {\rm (}resp. below{\rm )}.
\end{enumerate}
\end{Thm}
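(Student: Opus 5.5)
We treat Theorem \ref{thm:5} as a hybrid of Theorem \ref{thm:2} (\ref{thm:2.2}) and Theorem \ref{thm:4}: near the end where $c_1,c_2$ converge to $q$ the situation is that of Theorems \ref{thm:2} and \ref{thm:3}, and near the other end it is that of Theorem \ref{thm:4}. We write the argument for the case where $c_1,c_2$ converge to $q$ as $t\to-\infty$ and diverge as $t\to+\infty$. The other case follows by replacing $x_2$ with $-x_2$, which swaps the roles of $q_1$ and $q_2$ and exchanges ``above'' and ``below''. If the functions go to $-\infty$, we replace $x_1$ by $-x_1$ and swap $c_1,c_2$ as in Case 4-1 of the proof of Theorem \ref{thm:4}.

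\emph{Step 1: the Reeb space is the Reeb graph.} By (\ref{thm:5.1}) and (\ref{thm:5.2}), the preimage under ${\pi}_{2,1}$ of any compact set $K\subset \mathbb{R}-\{q\}$ meets $\overline{D_{c_1,c_2}}$ in a compact set. Near $t=-\infty$ this holds because both graphs accumulate only at the value $q$. Near $t=+\infty$ it holds because both graphs leave every bounded strip. Since the fibres of ${\pi}_{m+1,2}{\mid}_{X_{m,c_1,c_2}}$ are spheres or points, the restriction of ${\pi}_{m+1,1}{\mid}_{X_{m,c_1,c_2}}$ to the preimage of $\mathbb{R}-\{q\}$ is proper. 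With the discreteness hypothesis on critical values, \cite[Theorem 3.1]{saeki1} and \cite{saeki2} then give a graph structure there, exactly as in the proof of Theorem \ref{thm:2} (\ref{thm:2.2}). Around the level $q$ we use (\ref{thm:5.3}) verbatim as (\ref{thm:2.2.3}) was used. It makes the Reeb space locally a finite graph and Hausdorff at the finitely or infinitely many contours over $q$. The disjointness of the $N_{c_1,c_2,q,j}$ is used to separate distinct such contours. Since a critical point exists, the space is the Reeb graph.

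\emph{Step 2: sufficiency.} We must show that the closure of every edge $e$ is homeomorphic to $D^1$. Equivalently, starting from a point $p_e$ we reach a vertex both along and against the induced orientation of the Reeb digraph. For a point $p_D$ lying over the end where $c_i\to q$, condition (\ref{thm:5.4}) lets us run the path constructions of Propositions \ref{prop:1}, \ref{prop:2} and of the sufficiency part of Theorem \ref{thm:3}. These are horizontal segments to a graph of $c_i$ followed by monotone arcs along the graph, or vertical segments using $c_1(q_{D,1})=p_{D,1}$, ending at a critical point of $c_i$ and giving both directed paths. For points over the divergent end we copy Case 4-1/4-2 of Theorem \ref{thm:4}. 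Condition (\ref{thm:5.5}) provides critical points of $c_1$ or $c_2$ arbitrarily far out, so one direction ends at a vertex. Boundedness of the $c_i$ on the appropriate side, which follows from (\ref{thm:5.2}) together with convergence to $q$, provides the opposite direction. A general point lies over a compact middle region, where finitely many such moves reach one of the two ends. This requires checking that the Reeb quotient of $\overline{D_{c_1,c_2}}$ under ${\pi}_{2,1}$ agrees with that of $X_{m,c_1,c_2}$, which holds since the fibres are connected for $m\ge 2$.

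\emph{Step 3: necessity.} If (\ref{thm:5.4}) fails at the convergent end, one of the situations (\ref{prop:3.1})--(\ref{prop:3.4}) of Proposition \ref{prop:3} occurs there. The connected critical-point-free region exhibited in its proof yields an edge whose closure is a ray, so the Reeb graph is not a graph. If (\ref{thm:5.5}) fails, then for $t$ large both $c_i$ are strictly monotone and diverge, so the region beyond the last critical point contains no critical contour. Its image in the Reeb graph is an edge with one end, as in the necessity of Theorem \ref{thm:4}. I expect the main obstacle to be making the path-gluing in Step 2 rigorous. One must verify that the concatenated paths in $\overline{D_{c_1,c_2}}$ project injectively under ${\pi}_{2,1}$ and hence map to monotone paths in the Reeb graph. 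One must also verify that the two ends interact only through a compact middle part containing finitely many critical values.
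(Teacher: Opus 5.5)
Your proposal follows essentially the same route as the paper, in more detail. The first part uses properness over $\mathbb{R}-\{q\}$, Saeki's structure results and condition (\ref{thm:5.3}) as in Theorems \ref{thm:2} and \ref{thm:4}; the equivalence is decided by checking, for each point $p_e$ of an edge, whether consistently oriented paths reach vertices in both directions, with sufficiency from the path constructions of Propositions \ref{prop:1}, \ref{prop:2} and Theorems \ref{thm:3}, \ref{thm:4}, and necessity from Proposition \ref{prop:3} and the necessity part of Theorem \ref{thm:4}.

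One small slip: for $m=2$ the fibres of ${\pi}_{m+1,2}{\mid}_{X_{m,c_1,c_2}}$ over points of $D_{c_1,c_2}$ are copies of $S^0$, hence not connected. The identification of the Reeb quotients of $X_{m,c_1,c_2}$ and $\overline{D_{c_1,c_2}}$ should instead rest on this: every contour of ${\pi}_{2,1}{\mid}_{\overline{D_{c_1,c_2}}}$ is an interval in the $x_2$-direction with an endpoint in $\overline{D_{c_1,c_2}}-D_{c_1,c_2}$ (it cannot be a whole line because the $c_i$ diverge at one end), the fibre over that endpoint is a single point, and so the preimage of the contour is connected.
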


\begin{proof}
A proof of the former part of this theorem is shown as in the proof of Theorem \ref{thm:2} (\ref{thm:2.2}) and Theorem \ref{thm:4}. 

The latter part is shown as in the proofs of several Propositions and Theorems above. As in these proofs, we discuss whether for an arbitrary point $p_e$ in an arbitrary edge $e$ of the Reeb graph, we can have a path passing the point $p_e$, containing $e$, connecting some two vertices in the Reeb graph, and having a consistent orientation induced from the Reeb digraph.

This completes the proof.
\end{proof}
\begin{Ex}
	\label{ex:4}
	We can find a non-negative smooth function $c_0(x):\mathbb{R} \rightarrow \mathbb{R}$ with $c_0(x):=0$ ($x \leq 0$), $c_0(x):=x+2\sin x$ ($x>T_0$ with a suitable $T_0>0$) and $c_0(x_1)<c_0(x_2)$ for an arbitrary pair of real numbers with $0 \leq x_1<x_2 \leq T_0$. Let $c_1(x):=\frac{1}{2(x^2+1)}({\sin}^2 x)+c_0(x)$ and $c_2(x):=\frac{1}{x^2+1}+c_0(x)$. 
	
This is a case for Theorem \ref{thm:5} with the set $S(c_1) \bigcup S(c_2) \subset \mathbb{R}$ being unbounded and $q=0$. This is also a case for Theorem \ref{thm:5} satisfying the assumptions (\ref{thm:5.4}, \ref{thm:5.5}). 
\end{Ex}
The following may not be so difficult. The author does not reach explicit answers yet.
\begin{Prob}
	Can we find examples for Theorem \ref{thm:5} in the real analytic category and explicit single elementary functions $c_1$ and $c_2$?
	\end{Prob}
\begin{Prob}
Find further examples for cases of Propositions and Theorems we present here. In several cases, examples are not given in the present paper. Can we find them in the real analytic situations or using single elementary functions?
\end{Prob}
	
 \section{Conflict of interest and Data availability.}
  \noindent {\bf Conflict of interest.} \\
 The author is a researcher at Osaka Central Advanced Mathematical Institute (OCAMI researcher). This is supported by MEXT Promotion of Distinctive Joint Research Center Program JPMXP0723833165. He thanks this, where he is not employed there. \\
  %Some of works by other researchers and this version may overlap in some of the contents due to the nature that our problems are natural in theory of Morse functions and applications to differential topology and that related mathematical studies are very fundamental and classical in some senses, for example. However the present version of our paper is presented independent of these work. \\
  %Saga Souhatsu Mathematical Seminar (http://inasa.ms.saga-u.ac.jp/Japanese/saga-souhatsu.html), inviting the author as a speaker, is funded and supported by JST Fusion Oriented REsearch for disruptive Science and Technology JPMJFR202U: the author was a speaker on 2024/7/12 supported by this project.\\
  \ \\
  {\bf Data availability.} \\
  No data other than the present article is generated. We do not assume non-trivial arguments in the preprints (of the author) which are still unpublished formally, where we may refer to the preprints.


\begin{thebibliography}{25}
%	\bibitem{buchstaberpanov} V. M. Buchstaber and T. E. Panov, \textsl{Toric topology}, Mathematical Surveys and Monographs, Vol. 204, American Mathematical Society, Providence, RI, 2015.

	%	\bibitem{calabi} E. Calabi, Quasi-surjective mappings and a generation of Morse theory, Proc. U.S.-Japan Seminar in Differential Geometry, Kyoto, 1965, pp. 13--16.
	%
	%		\bibitem{cavicchioli} A. Cavicchioli, \textsl{Covering numbers of manifolds and critical points of a Morse function}, Israel. J. Math. 70 (1990), 279--304.
%	\bibitem{cerf} J. Cerf, \textsl{La stratification naturelle des espaces de fonctions deff\'erentiables r'eelles et le th'eor`eme de la pseudo-isotopie}, Inst. Hautes Etudes Sci. Publ. Math. 39 (1970), 5--173.
	%		\bibitem{choimasudasuh} S. Choi, M. Masuda and D. Y. Suh, \textsl{Topological classification of generalized Bott towers}, Trans. Amer. Math. Soc. 362 (2010), 1097--1112.

	%		\bibitem{cornealuptonopreatanre} O. Cornea, G. Lupton, J. Oprea and D. Tanr\'e, \textsl{Lusternik-Schnirelmann category}, Mathematical Surveys and Monographs, 103, Amer. Math. Soc., Providence, RI, 2003.
	%\bibitem{crowleyescher} D. Crowley and C. Escher, \textsl{A classification of $S^3$-bundles over $S^4$}, Differential. Geom. Appl. 18 (2003), 363--380, arXiv:0004147.
	%\bibitem{crowleynordstrom} D. Crowley and J. Nordstr\"{o}m, \textsl{The classification of $2$-connected $7$-manifolds}, Proc. London. Math. Soc. 119 (2019), 1--54, arXiv:1406.2226.

%\bibitem{bochnakcosteroy} J. Bochnak, M. Coste and M.-F. Roy, \textsl{Real algebraic geometry}, Ergebnisse der Mathematik und ihrer Grenzgebiete (3) [Results in Mathematics and Related Areas (3)], vol. 36, Springer-Verlag, Berlin, 1998. Translated from the 1987 French original; Revised by the authors.
%0		\bibitem{bochnakkucharz} J. Bochnak and W. Kucharz, \textsl{Algebraic approximation of mappings into spheres}, Michigan Mathematical Journal, vol. 34, no. 1, 1987.
%	\bibitem{bodinpopescupampusorea} A. Bodin, P. Popescu-Pampu and M. S. Sorea, \textsl{Poincar\'e-Reeb graphs of real algebraic domains}, Revista Matem\'atica Complutense, https://link.springer.com/article/10.1007/s13163-023-00469-y, arXiv:2207.06871v2.
%\bibitem{bott} R. Bott, \textsl{Nondegenerate critical manifolds}, Ann. of Math. 60 (1954), 248--261.
%	\bibitem{burletderham} O. Burlet and G. de Rham, \textsl{Sur certaines applications g\'en\'eriques d'une vari\'et\'e close a $3$ dimensions dans le plan}, Enseign. Math. 20 (1974). 275--292.
%\bibitem{costantino}  F. Costantino, \textsl{A short introduction to shadows of $4$-manifolds}, Fundamenta Mathematicae 251 no. 2 (2005), 427--442.
%\bibitem{costantinothurston} F. Costantino, D. Thurston, \textsl{$3$-manifolds efficiently bound $4$-manifolds}, J. Topol. 1 (2008),
%703--745.
%	\bibitem{delzant} T. Delzant, \textsl{Hamiltoniens p\'eriodiques et images convexes de l'application moment}, Bull. Soc. Math. France 116 (1988), No. 3, 315--339.
%\bibitem{ehresmann} C. Ehresmann, \textsl{Les connexions infinitesimales dans un espace fibre differentiable}, Colloque de Topologie, Bruxelles (1950), 29--55.

%\bibitem{eliashberg1} Y. Eliashberg, \textsl{On singularities of folding type}, Math. USSR Izv. 4 (1970). 1119--1134.
%\bibitem{eliashberg2} Y. Eliashberg, \textsl{Surgery of singularities of smooth mappings}, Math. USSR Izv. 6 (1972). 1302--1326.
%\bibitem{elredge} N. Elredge, \textsl{{\it Answer to} On finding polynomials that approximate a function and its derivative}, StackExchange, question 555712 (2013), https://math.stackexchange.com/questions/555712/on-finding-polynomials-that-approximate-a-function-and-its-derivative-extension.
%\bibitem{fernandezMunoz} M. Fern\'andez and V. mu\~noz, \textsl{On non-formal simply connected manifolds}, Topology Appl. 135 Issues 1--3 (2004), 111--117. math.DG/0212141.
%\bibitem{furuyaporto} Y. K. S. Furuya and P. Porto, \textsl{On special generic maps from a closed manifold into the plane}, Topology Appl. 35 (1990), 41--52.
%\bibitem{fujitakitabeppumitsuishi} H. Fujita, Y Kitabeppu and A. Mitsuishi, \textsl{Distance functions and convex bodies and symplectic toric manifolds}, arXiv:2003.02293.

%\bibitem{gabard} A. Gabard, \textsl{Notes on Hilbert's 16th: experiencing Viro's theory}, arXiv:1310.1865, 2013.
\bibitem{gelbukh} I. Gelbukh, \textsl{On the topology of the Reeb graph}, Publicationes Mathematicae Debrecen 104(3--4) (2023), 343--365.
%\bibitem{gelbukh} I. Gelbukh, \textsl{Loops in Reeb graphs of $n$-manifolds}, diskrete \& Computational Geometry, 59 (4) (2018), 843--863. 


%\bibitem{gelbukh2} I. Gelbukh, \textsl{Realization of a digraph as the Reeb graph of a Morse-Bott function on a given surface}, Topology and its Applications, 2024. 
%\bibitem{gelbukh3} I. Gelbukh, \textsl{Reeb Graphs of Morse-Bott Functions on a Given Surface}, Bulletin of the Iranian Mathematical Society, Volume 50 Article number 84, 2024, 1--17. 

%%\bibitem{gelbukh2} I. Gelbukh, \textsl{Approximation of Metric Spaces by Reeb Graphs: Cycle Rank of a Reeb Graph, the Co-rank of the Fundamental Group, and Large Components of Level Sets on Riemannian Manifolds}, Filomat (in press), arxiv:1903.00777.
%\bibitem{gelbukh} I. Gelbukh, \textsl{A finite graph is homeomorphic to the Reeb graph of a Morse-Bott function}, Mathematica Slovaca, 71 (3), 757--772, 2021; doi: 10.1515/ms-2021-0018. 
%%\bibitem{gelbukh2} I. Gelbukh, \textsl{Morse-Bott functions with two critical values on a surface}, Czechoslovak Mathematical Journal, 71 (3), 865--880, 2021; doi: 10.21136/CMJ.2021.0125-20. 
%\bibitem{golubitskyguillemin} M. Golubitsky and V. Guillemin, \textsl{Stable Mappings and Their Singularities}, Graduate Texts in Mathematics (14), Springer-Verlag(1974).
\bibitem{hatcher} A. Hatcher, \textsl{Algebraic Topology}, Electronic version, https://pi.math.cornell.edu/$\sim$hatcher/AT/AT+.pdf.
%\bibitem{hatcher} A. E. Hatcher, \textsl{A proof of the Smale conjecture}, Ann. of Math. 117 (1983), 553--607.
%\bibitem{hempel} J. Hempel, \textsl{3- Manifolds}, AMS Chelsea Publishing, 2004. 
%\bibitem{hiratukasaeki} J. T. Hiratuka and O. Saeki, \textsl{Triangulating Stein factorizations of generic maps and Euler Characteristic formulas}, RIMS Kokyuroku Bessatsu B38 (2013), 61--89. 
%\bibitem{hiratukasaeki2} J. T. Hiratuka and O. Saeki, \textsl{Connected components of regular fibers of differentiable maps}, in "Topics on Real and Complex Singularities", Proceedings of the 4th Japanese-Australian Workshop (JARCS4), Kobe 2011,  World Scientific, 2014, 61--73. 
%\bibitem{ishikawakoda} M. Ishikawa and Y. Koda, \textsl{Stable maps and branched shadows of $3$-manifolds}, Mathematische Annalen 367 (2017), no. 3, 1819--1863, arXiv:1403.0596.
\bibitem{izar} Izar. S. A, \textsl{Funções de Morse e Topologia das Superfícies I: O grafo de Reeb de $f:M \rightarrow \mathbb{R}$}, Métrica no. 31, In Estudo e Pesquisas em Matemática, Brazil: IBILCE, 1988, https://www.ibilce.unesp.br/Home/Departamentos/Matematica/metrica-31.pdf.
%\bibitem{kervairemilnor} M. Kervaire and J. W. Milnor, \textsl{Groups of homotopy spheres : I}, Ann. of Math. 77 (1963), 504--537.
%\bibitem{kitazawa1} N. Kitazawa, \textsl{On round fold maps} (in Japanese), RIMS Kokyuroku Bessatsu B38 (2013), 45--59.
%\bibitem{kitazawa2} N. Kitazawa, \textsl{On manifolds admitting fold maps with singular value sets of concentric spheres}, Doctoral Dissertation, Tokyo Institute of Technology (2014).
%\bibitem{kitazawa3} N. Kitazawa, \textsl{Fold maps with singular value sets of concentric spheres}, Hokkaido Mathematical Journal Vol.43, No.3 (2014), 327--359.
\bibitem{kitazawa1} N. Kitazawa, \textsl{On Reeb graphs induced from smooth functions on $3$-dimensional closed orientable manifolds with finitely many singular values}, Topol. Methods in Nonlinear Anal. Vol. 59 No. 2B, 897--912, arXiv:1902.08841.
\bibitem{kitazawa2} N. Kitazawa, \textsl{On Reeb graphs induced from smooth functions on closed or open surfaces}, Methods of Functional Analysis and Topology Vol. 28 No. 2 (2022), 127--143, arXiv:1908.04340.
\bibitem{kitazawa3} N. Kitazawa, \textsl{Real algebraic functions on closed manifolds whose Reeb graphs are given graphs}, Methods of Functional Analysis and Topology Vol. 28 No. 4 (2022), 302--308, arXiv:2302.02339, 2023. 
%\bibitem{kitazawa3} N. Kitazawa, \textsl{On Reeb graphs induced from smooth functions on $3$-dimensional closed manifolds which may not be orientable}, Methods of Functional Analysis and Topology Vol. 29 No. 1 (2023), 57--72, 2024.

\bibitem{kitazawa4} N. Kitazawa, \textsl{
Constructing Morse functions with given Reeb graphs and level sets}, accepted for publication in Topol. Methods in Nonlinear Anal., arXiv:2108.06913
(, where the title has been changed from the title there), 2025.
%\bibitem{kitazawa} N. Kitazawa, \textsl{Maps on manifolds onto graphs locally regarded as the quotient maps onto Reeb spaces of some differentiable maps and a new reconstruction problem}, arXiv:1909.10315.
\bibitem{kitazawa5} N. Kitazawa, \textsl{Reconstructing real algebraic maps locally like moment maps with prescribed images and compositions with the canonical projections to the $1$-dimensional real affine space}, arXiv:2303.10723.
\bibitem{kitazawa6} N. Kitazawa, \textsl{Some remark on real algebraic maps which are topologically special generic maps and generalize the canonical projections of the unit spheres}, arXiv:2312.10646, 2024.
%\bibitem{kitazawa3} N. Kitazawa, \textsl{Explicit construction of explicit real algebraic functions and real algebraic manifolds via Reeb graphs}, Algebraic and geometric methods of analysis 2023 “The book of abstracts”, 49—51, this is the abstract book of the conference "Algebraic and geometric methods of analysis 2023" (https://www.imath.kiev.ua/$\sim$topology/conf/agma2023/), https://imath.kiev.ua/$\sim$topology/conf/agma2023/contents/abstracts/texts/kitazawa/kitazawa.pdf.
%\bibitem{kitazawa4} N. Kitazawa, \textsl{Closed manifolds admitting no special generic maps whose codimensions are negative and their cohomology rings}, submitted to a refereed journal, arxiv:2008.04226v5.
%\bibitem{kitazawa4} N. Kitazawa, \textsl{Notes on explicit special generic maps into Euclidean spaces whose dimensions are greater than $4$}, a revised version is submitted based on positive comments (major revision) by referees and editors after the first submission to a refereed journal, arxiv:2010.10078.
%\bibitem{kitazawa5} N. Kitazawa, \textsl{Construction of real algebraic functions with prescribed preimages}, a positive report for major revision has been sent and this is submitted again to a refereed journal, arXiv:2303.00953v3.%
%\bibitem{kitazawa6} N. Kitazawa, \textsl{A note on real algebraic maps which are topologically special generic maps}, previous version(s) of the present article, arXiv:2312.10646.
\bibitem{kitazawa7} N. Kitazawa, \textsl{A note on Reeb spaces of explicit real analytic functions}, submitted to a refereed journal, arXiv:2601.11648.
\bibitem{kitazawa8} N. Kitazawa, \textsl{Reeb spaces of functions being analytic on dense subsets and their graph structures}, https://arxiv.org/.
%\bibitem{kitazawa9} N. Kitazawa, \textsl{A note on cohomological structures of special generic maps}, a revised version is submitted based on positive comments by a referee and editors after the third submission to a refereed journal.
%\bibitem{kitazawa6} N. Kitazawa, \textsl{Round fold maps and the topologies and the differentiable structures of manifolds admitting explicit ones}, submitted to a refereed journal, arXiv:1304.0618.
%\bibitem{kitazawa7}  N. Kitazawa, \textsl{Round fold maps and the topologies and the differentiable structures of manifolds admitting explicit ones}, arXiv:1304.0618.

%\bibitem{kitazawa0.5} N. Kitazawa, \textsl{Constructing fold maps by surgery operations and homological information of their Reeb spaces}, submitted to a refereed journal, arxiv:1508.05630.
%\bibitem{kitazawa0.6} N. Kitazawa, \textsl{Notes on fold maps obtained by surgery operations and algebraic information of their Reeb spaces}, arxiv:1811.04080.

%\bibitem{kitazawa6} N. Kitazawa, \textsl{Notes on explicit special generic maps into Euclidean spaces whose dimensions are greater than $4$}, a revised version is submitted based on positive comments (major revision) by referees and editors after the first submission to a refereed journal, arxiv:2010.10078.
%\bibitem{kitazawa6} N. Kitazawa, \textsl{On Reeb graphs induced from smooth functions on $3$-dimensional closed manifolds which may not be orientable}, a revised version is submitted to a refereed journal after based on positive comments by editors and referees after the second submission to a refreed journal, arXiv:2108.01300.
%\bibitem{kitazawa2} N. Kitazawa, \textsl{Realization problems of graphs as Reeb graphs of Morse functions with prescribed preimages}, submitted to a refereed journal, arXiv:2108.06913.
%\bibitem{kitazawa10} N. Kitazawa,\textsl{Round fold maps on $3$-dimensional manifolds and their integral and rational cohomology rings}, arXiv:2301.07008.
%\bibitem{kitazawa7} N. Kitazawa, \textsl{A class of naturally generalized special generic maps}, arXiv:2212.03174.
%\bibitem{kitazawa8} N. Kitazawa, \textsl{A note on cohomological structures of special generic maps}, a revised version is submitted based on positive comments by referees and editors after the third submission to a refereed journal.
%\bibitem{kitazawasaeki1} N. Kitazawa and O. Saeki, \textsl{Round fold maps on $3$-manifolds}, accepted for publication after a refereeing process and to appear in Algebraic \& Geometric Topology, arXiv:2105.00974.
		%	\bibitem{kitazawasaeki2} N. Kitazawa and O. Saeki, \textsl{Round fold maps of $n$-dimensional manifolds into ${\mathbb{R}}^{n-1}$}, submitted to a refereed journal, arXiv:2111.13510.
%\bibitem{ishikawakoda} M. Ishikawa, Y. Koda, \textsl{Stable maps and branched shadows of $3$-manifolds}, arXiv:1403.0596.
%\bibitem{kobayashisaeki} M. Kobayashi and O. Saeki, \textsl{Simplifying stable mappings into the plane from a global viewpoint}, Trans. Amer. Math. Soc. 348 (1996), 2607--2636.
%\bibitem{kohnpieneranestadrydellshapirosinnsoreatelen} K. Kohn, R. Piene, K. Ranestad, F. Rydell, B. Shapiro, R. Sinn, M-S. Sorea and S. Telen, \textsl{Adjoints and Canonical Forms of Polypols}, arXiv:2108.11747.
%\bibitem{kollar} J. Koll\'ar, \textsl{Nash's work in algebraic geometry}, Bulletin (New Series) of the American Mathematical Society (2) 54, 2017, 307--324.
%\bibitem{kucharz} W. Kucharz, \textsl{Some open questions in real algebraic geometry}, Proyecciones Journal of Mathematics, Vol. 41 No. 2 (2022), Universidad Cat\'olica del Norte Antofagasta, Chile, 437--448.
%\bibitem{lellis} Camillo De Lellis, \textsl{The Masterpieces of John Forbes Nash Jr.}, The Abel Prize 2013--2017 (Helge Holden and Ragni Piene, eds.), Springer International Publishing, Cham, 2019, 391--499, https://www.math.ias.edu/delellis/sites/math.ias.edu.delellis/files/Nash\_Abel\_75.pdf, arXiv:1606.02551.
\bibitem{martinezalfaromezasarmientooliveira} J. Martinez-Alfaro, I. S. Meza-Sarmiento and R. Oliveira, \textsl{Topological  classification of simple Morse Bott functions on surfaces}, Contemp. Math. 675 (2016), 165--179.%
%\bibitem{marzantowiczmichalak} W. Marzantowicz and L. P. Michalak, \textsl{Relations between Reeb graphs, systems of hypersurfaces and epimorphisms onto free groups}, Fund. Math., 265 (2), 97--140, 2024.
\bibitem{masumotosaeki} Y. Masumoto and O. Saeki, \textsl{A smooth function on a manifold with given Reeb graph}, Kyushu J. Math. 65 (2011), 75--84.
%\bibitem{maciasvirgospereirasaez} E. Mac\'ias-Virg\'os and M. J. Pereira-S\'aez, Height functions on compact symmetric spaces, Monatshefte f\"ur Mathematik 177 (2015), 119--140.
%\bibitem{mather1} J. Mather, \textsl{Notes on topological stability}, Bull. Amer. Math. Soc. (N. S.) 49 (4) (2012), 475--506.
%\bibitem{mather2} J. N Mather, \textsl{Stability of $C^{\infty}$ mappings. I. The division theorem}, Ann. of Math. (2) 87, 89--104 (1968).
%%\bibitem{mather3} J. N. Mather, \textsl{Stability of $C^{\infty}$ mappings. III. Finitely determined mapgerms}, Inst. Hautes. \'Etudes Sci. Publ. Math. (35) (1968), 279--308.
%\bibitem{mather4} J. N. Mather, \textsl{Some non-finitely determined map-germs}, In: Symposia Mathematica, Vol. II (INDAM, Rome, 1968), 303--320, Academic Press, London (1969).
%.\bibitem{mather5} J. N. Mather, \textsl{Stability of $C^{\infty}$ mappings. II. Infinitesimal stability implies stability}, Ann. of Math. (2) 89 (1969), 254--291 .
%\bibitem{mather6} J. N. Mather, \textsl{Stability of $C^{\infty}$ mappings. IV. Classification of stable germs by $R$-algebras}, Inst. Hautes \'Etudes Sci. Publ. Math. (37) (1969), 223--248.
%\bibitem{mather7} J. N. Mather, \textsl{Stability of $C^{\infty}$ mappings. V. Transversality}, Advances in Math. 4 (1970), 301--336.
%\bibitem{mather8} J. N. Mather, \textsl{Stability of $C^{\infty}$ mappings. VI: The nice dimensions}, In: Proceedings of Liverpool Singularities-Symposium, I (1969/70), 207--253, Lecture Notes in Math., Vol. 192 (1971).
\bibitem{michalak} L. P. Michalak, \textsl{Realization of a graph as the Reeb graph of a Morse function on a manifold}. Topol. Methods in Nonlinear Anal. 52 (2) (2018), 749--762, arXiv:1805.06727.
%\bibitem{michalak2} L. P. Michalak, \textsl{Combinatorial modifications of Reeb graphs and the realization problem}, arxiv:1811.08031.
%\bibitem{milnor} J. Milnor, \textsl{Singular points of complex hypersurfaces}, Annals of Mathematics Studies, No. 61, Princeton University Press, Princeton, N. J.; University of Tokyo Press, Tokyo, 1968.
%\bibitem{milnor1} J. Milnor, \textsl{On manifolds homeomorphic to the $7$-sphere}, Ann. of Math. (2) 64 (1956), 399--405.
%%\bibitem{milnor2} J. Milnor, \textsl{Lectures on the h-cobordism theorem}, Math. Notes, Princeton Univ. Press, Princeton, N.J. 1965.
%\bibitem{milnor3} J. W. Milnor, \textsl{Topology from the Differentiable Viewpoint}, University Press of Virginia, Charlottesville, VA, 1965.
%\bibitem{moise} E. E. Moise, \textsl{Affine Structures in $3$-Manifold{\rm :} V. The Triangulation Theorem and Hauptvermutung}, Ann. of Math., Second Series, Vol. 56, No. 1 (1952), 96--114.
%\bibitem{morin} B. Morin, \textsl{Formes canoniques des singulariti\'{e}s d\'{}une application diff\'{e}rentiable}, C. E. Acad. Sci. Paris 260 (1965), 5662--5665, 6503--6506.
%\bibitem{nash} J. Nash, \textsl{Real algebraic manifolds}, Ann. of Math. (2) 56 (1952), 405--421.
%\bibitem{ranicki} A. Ranicki, \textsl{Algebraic and geometric surgery}, https://www.maths.ed.ac.uk/~v1ranick/books/surgery.pdf, 2002.
%\bibitem{ramanujam} S. Ramanujam, \textsl{Morse theory of certain symmetric spaces}, J. Diff. Geom. 3 (1969), 213--229.
\bibitem{reeb} G. Reeb, \textsl{Sur les points singuliers d\'{}une forme de Pfaff compl\'{e}tement int\`{e}grable ou d\'{}une fonction num\'{e}rique}, Comptes Rendus
 Hebdomadaires des S\'{e}ances de I\'{}Acad\'{e}mie des Sciences 222 (1946), 847--849.
%\bibitem{ruas} Maria Aparecida Soares Ruas, \textsl{Old and New Results on Density of Stable Mappings}, Handbook of Geometry and Topology of Singularities III (2022) (editted y Jos\'e Luis Cisneros-Molina, L\^e D\~ung Tr\'ang and Jos\'e Seade), 1--80, Springer.
%\bibitem{saeki1} O. Saeki, \textsl{Notes on the topology of folds}, J. Math. Soc. Japan Volume 44, Number 3 (1992), 551--566.
%\bibitem{saeki1} O. Saeki, \textsl{Topology of special generic maps of manifolds into Euclidean spaces}, Topology Appl. 49 (1993), 265--293, we can also find at "https://core.ac.uk/download/pdf/81973672.pdf" for example.
\bibitem{saeki1} O. Saeki, \textsl{Reeb spaces of smooth functions on manifolds}, International Mathematics Research Notices, maa301, Volume 2022, Issue 11, June 2022, 3740--3768, https://doi.org/10.1093/imrn/maa301, arXiv:2006.01689.
\bibitem{saeki2} O. Saeki, \textsl{Reeb spaces of smooth functions on manifolds II}, Res. Math. Sci. 11, article number 24 (2024), https://link.springer.com/article/10.1007/s40687-024-00436-z.
%\bibitem{saeki0.2} O. Saeki, \textsl{Topology of singular fibers of differentiable maps}, Lecture Notes in Math., Vol. 1854, Springer-Verlag, 2004. 
%\bibitem{saeki4} O. Saeki, \textsl{Morse functions with sphere fibers}, Hiroshima Math. J. Volume 36, Number 1 (2006),  141--170.
%\bibitem{saeki} O. Saeki, \textsl{Reeb spaces of smooth functions on manifolds}, International Mathematics Research Notices, maa301, Volume 2022, Issue 11, June 2022, https://doi.org/10.1093/imrn/maa301, arXiv:2006.01689.
%\bibitem{saekitakase} O. Saeki and M. Takase, \textsl{Desingularizing special generic maps}, Journal of G\"okova Geometry Topology (2013), 1--24.
%\bibitem{sakurai} S. Sakurai, Master Thesis, Kyushu. Univ..
% \bibitem{saekitakase} O. Saeki and M. Takase, \textsl{Desingularizing special generic maps}, Journal of Gokova Geometry Topology 7 (2013), 1--24.
%\bibitem{saeki2} O. Saeki, \textsl{Topology of special generic maps of manifolds into Euclidean spaces}, Topology Appl. 49 (1993), 265--293.
%\bibitem{saeki4} O. Saeki, \textsl{Singular fibers and $4$-dimensional cobordism group}, Pacific J. Math. 248 (2010), 233--256.
%\bibitem{saekisakuma} O. Saeki and K. Sakuma, \textsl{On special generic maps into ${\mathbb{R}}^3$}, Pacific J. Math. 184 (1998), 175--193.
%\bibitem{saekisuzuoka} O. Saeki and K. Suzuoka, \textsl{Generic smooth maps with sphere fibers} J. Math. Soc. Japan Volume 57, Number 3 (2005), 881--902.
\bibitem{sharko} V. Sharko, \textsl{About Kronrod-Reeb graph of a function on a manifold}, Methods of Functional Analysis and
 Topology 12 (2006), 389--396.
%\bibitem{shiota} M. Shiota, \textsl{Thom's conjecture on triangulations of maps}, Topology 39 (2000), 383--399.
%\bibitem{smale1} S. Smale, \textsl{Generalized Poincare's conjecture in dimensions greater than four}, Ann. of Math. (2) 74 (1961) 391--406.
%\bibitem{smale2} S. Smale, \textsl{On the structure of manifolds}, Amer. J. Math 84 (1962), 387--399.
%\bibitem{smale} S. Smale, \textsl{On the structure of manifolds}, Amer. J. Math 84 (1962), 387--399.
%\bibitem{sorea1} M. S. Sorea, \textsl{The shapes of level curves of real polynomials near strict local maxima},  Ph. D. Thesis, Universit\'e de Lille, Laboratoire Paul Painlev\'e, 2018.
%\bibitem{sorea2} M. S. Sorea, \textsl{Measuring the local non-convexity of real algebraic curves}, Journal of Symbolic Computation 109 (2022), 482--509.
%\bibitem{sorea1} M. S. Sorea, \textsl{The shapes of level curves of real polynomials near strict local maxima},  Ph. D. Thesis, Universit\'e de %Lille, Laboratoire Paul Painlev\'e, 2018.
%\bibitem{sorea2} M. S. Sorea, \textsl{Measuring the local non-convexity of real algebraic curves}, J. Symbolic Compute. 109 (2022), 482--509.
%\bibitem{stong} R. E. Stong, \textsl{Notes on cobordsm theory}, Princeton Universty Press, 1968.
%\bibitem{takeuchi} M. Takeuchi, \textsl{Nice functions on symmetric spaces}, Osaka. J. Mat. (2) Vol. 6 (1969), 283--289%
%\bibitem{thom} R. Thom, \textsl{Les singularites des applications differentiables}, Ann. Inst. Fourier (Grenoble) 6 (1955-56), 43--87.
%\bibitem{tognoli} A. Tognoli, \textsl{Su una congettura di Nash}, Ann. Scuola Norm. Sup. Pisa (3) 27 (1973), 167--185.
%%\bibitem{turaev} Vladimir G. Turaev, \textsl{Topology of shadows}, Preprint, 1991.
%\bibitem{viro} O. Viro, \textsl{Patchworking real algebraic varieties}, based on the first chapter of doctoral dissertation, arXiv:math/0611382.
%\bibitem{wall} C. T. C Wall, \textsl{Classification problems in differential topology -- {\rm I:} Classificationon handlebodies}, Topology 2 (1963), 253--261.
%\bibitem{wall2} C. T. C. Wall \textsl{Classification problems in differential topology -- {\rm II:} Diffeomorphismsof handlebodies}, Topology 2 (1963), 263--272.
%\bibitem{wall3} C. T. C. Wall, \textsl{Classification problems in differential topology -- {\rm Q:} Quadratic forms on finite groups and related topics}, Topology 2 (1963), 281--298.
%\bibitem{wall4} C. T. C. Wall, \textsl{Classification problems in differential topology -- {\rm III:} Applications to special cases}, Topology 3 (1965), 291--304.
%%\bibitem{wall5} C. T. C. Wall, \textsl{Classification problems in differential topology -- {\rm IV:} Thickenings}, Topology 5 (1966), 73--94.
%\bibitem{wall6} C. T. C. Wall, \textsl{Classification problems in differential topology -- {\rm VI:} Classification of |{\rm (}$s-1${\rm )}-connected {\rm (}$2s+1${\rm )}-manifolds}, Topology 6 (3) (1967), 273--296.
%\bibitem{whitney} H.  Whitney,  \textsl{On singularities of mappings of Euclidean spaces: I,  mappings of the plane into the plane},  Ann.  of Math.  62 (1955),  374--410. 

	
\end{thebibliography}
\end{document}